\documentclass[reqno]{amsart}

\usepackage[left=26mm,top=30mm,right = 26mm, bottom = 30mm]{geometry}

\usepackage[english]{babel} 
\usepackage[utf8]{inputenc} 
\usepackage[T1]{fontenc}
\usepackage{tikz-cd}
\usepackage{enumitem}
\usepackage{amsthm}
\usepackage[backgroundcolor=blue!5!white]{todonotes}
\usepackage{mathtools} 
\usepackage[pagebackref]{hyperref}
\usepackage{mathrsfs}
\usepackage{quiver}
\usepackage{bussproofs}
\usepackage{mleftright}
\usepackage{faktor}
\usepackage{xfrac}
\usepackage{extarrows}
\usepackage{tikz}
\usepackage{stmaryrd}

\usepackage{orcidlink}

\usepackage[capitalize]{cleveref} 

\theoremstyle{plain}

\newtheorem{theorem}{Theorem}[section]

\newtheorem{lemma}[theorem]{Lemma}
\newtheorem{corollary}[theorem]{Corollary}

\newtheorem{claim}[theorem]{Claim}

\theoremstyle{plain}
\newtheorem*{claim*}{Claim}

\theoremstyle{definition}

\newtheorem{definition}[theorem]{Definition}
\newtheorem*{definition*}{Definition}
\newtheorem{notation}[theorem]{Notation}
\newtheorem{example}[theorem]{Example}

\newtheorem{remark}[theorem]{Remark}
\newtheorem*{remark*}{Remark}

\theoremstyle{remark}

\newenvironment{claimproof}[1][Proof of claim]{%
  \begin{proof}[#1]%
}{\end{proof}}

\AddToHook{env/proposition/begin}{\crefalias{theorem}{proposition}}
\AddToHook{env/lemma/begin}{\crefalias{theorem}{lemma}}
\AddToHook{env/corollary/begin}{\crefalias{theorem}{corollary}}
\AddToHook{env/conjecture/begin}{\crefalias{theorem}{conjecture}}
\AddToHook{env/claim/begin}{\crefalias{theorem}{claim}}

\AddToHook{env/definition/begin}{\crefalias{theorem}{definition}}
\AddToHook{env/notation/begin}{\crefalias{theorem}{notation}}
\AddToHook{env/example/begin}{\crefalias{theorem}{example}}
\AddToHook{env/examples/begin}{\crefalias{theorem}{examples}}

\AddToHook{env/remark/begin}{\crefalias{theorem}{remark}}

\newcommand{\cat}[1]{\mathsf{#1}}

\newcommand{\Set}{\cat{Set}}
\newcommand{\BA}{\cat{BA}}
\newcommand{\Pos}{\cat{Pos}}
\newcommand{\Ctx}{\cat{Ctx}}

\newcommand{\C}{\mathsf{C}}

\newcommand{\Ult}{\mathrm{Ult}}

\newcommand{\DoctABA}{\cat{Doct}_{\EA\BA}}
\newcommand{\Stone}{\cat{Stone}}

\newcommand{\LT}{\mathsf{LT}}

\newcommand{\N}{\mathbb{N}}

\newcommand{\T}{\mathcal{T}}

\newcommand{\Hom}{\mathrm{Hom}}

\newcommand{\Mod}{\mathrm{Mod}}
\newcommand{\Typ}{\mathrm{Typ}}

\newcommand{\tmn}{\mathbf{1}}

\newcommand{\F}{\mathbb{F}}

\renewcommand{\P}{\mathbf{P}}

\newcommand{\R}{\mathbf{R}}
\newcommand{\X}{\mathcal{X}}
\newcommand{\Y}{\mathcal{Y}}
\newcommand{\Z}{\mathcal{Z}}

\newcommand{\fa}[2]{(\forall{#1})_{#2}\,}

\newcommand{\ex}[2]{(\exists{#1})_{#2}\,}

\newcommand{\EA}{{\forall\mkern-2mu\exists}}

\newcommand{\ple}[1]{\langle#1\rangle}

\newcommand{\pws}{\mathscr{P}}

\newcommand{\m}{\mathfrak{m}}
\newcommand{\n}{\mathfrak{n}}
\renewcommand{\r}{\mathfrak{r}}

\DeclareMathOperator{\id}{id}
\DeclareMathOperator{\pr}{pr}
\newcommand{\op}{^{\mathrm{op}}}

\newcommand{\longhookrightarrow}{\lhook\joinrel\longrightarrow}

\newcommand{\expl}[2]{%
  \underset{%
    \substack{%
      \big\uparrow\\[-0.2ex]
      \mathrlap{%
        \mbox{%
          \footnotesize
          \hspace{-1em}%
          \begin{tabular}{@{}l@{}}
            #2
          \end{tabular}%
        }%
      }%
    }%
  }{#1}%
}

\newcommand{\equalexpl}[1]{\expl{=}{#1}}

\newcommand{\subseteqexpl}[1]{\expl{\subseteq}{#1}}
\newcommand{\iffexpl}[1]{\expl{\iff}{#1}}

\makeatletter 
\def\l@subsection{\@tocline{2}{0pt}{2pc}{6pc}{}} 
\makeatother

\title[The doctrinal G\"odel's completeness theorem and the type space functor]{The doctrinal G\"odel's completeness theorem\\and the type space functor}

\hypersetup{
    pdftitle={},
    pdfauthor={},
    pdfmenubar=false,
    pdffitwindow=true,
    pdfstartview=FitH,
    colorlinks=true,
    linkcolor=teal,
    citecolor=magenta,
    urlcolor=cyan
}

\keywords{First-order Boolean doctrine,
Lawvere hyperdoctrine,
Gödel completeness theorem,
categorical/algebraic logic,
categorical/algebraic semantics of first-order logic,
spaces of models/types,
Stone duality}

\subjclass[2020]{Primary: 03B10. Secondary: 03G15, 03G30, 18C10, 06E15, 06D50, 18F70.}

\author[Marco Abbadini]{Marco Abbadini\textsuperscript{ \orcidlink{0000-0003-1292-6006}}}
\address[Marco Abbadini]{Université catholique de Louvain, Research Institute in Mathematics and Physics, 
1348 Louvain-la-Neuve, Belgium}
\email{marco.abbadini@uclouvain.be}
\urladdr{\url{https://marcoabbadini-uni.github.io}}

\author[Francesca Guffanti]{Francesca Guffanti\textsuperscript{ \orcidlink{0009-0005-8792-0655}}}\address[Francesca Guffanti]{Université Savoie Mont Blanc,
LAMA, Campus Scientifique,
73376 Le Bourget-du-Lac Cedex, France}
\email{francesca.guffanti@univ-smb.fr}
\urladdr{\url{https://sites.google.com/view/francesca-guffanti}}

\begin{document}

\begin{abstract}
    We give a self-contained proof of G\"odel's completeness theorem entirely within the formalism of first-order Boolean doctrines (an algebraic approach to classical many-sorted first-order logic).
    
    Moreover, we show that G\"odel's completeness theorem entails that the fiberwise Stone dual of a first-order Boolean doctrine is its type space functor; roughly speaking, this means that the Stone dual of the Boolean algebra of formulas in context $X$ is the Stone space of $X$-pointed models modulo elementary equivalence.
\end{abstract}

\maketitle

\tableofcontents

\section{Introduction}
Just like an algebraic approach to classical propositional logic is given by Boolean algebras, an algebraic approach to classical first-order logic is given by first-order Boolean doctrines, a notion having its roots in Lawvere's work \cite{Lawvere69,Lawvere70}.
The main idea is to organize formulas in different Boolean algebras according to their contexts; that is, a formula is always considered relative to a finite set of (possibly dummy) free variables.

One of the fundamental theorems of classical first-order logic is G\"odel's completeness theorem \cite{Godel1929,Godel1930} (which can also be phrased as the Model Existence Theorem: every consistent theory has a model).
G\"odel's completeness theorem tells us that the rules of the calculus are the correct ones to capture the intended semantics.

In this paper, we consider a doctrinal version of G\"odel’s completeness theorem, both in the case without and with equality: every consistent first-order Boolean doctrine has a model, and every consistent \emph{elementary} (i.e., with equality)  first-order Boolean doctrine has a model.
These doctrinal versions tell us that the set of axioms of (elementary) first-order Boolean doctrines is complete for the intended semantics. They can be seen as a first-order version of Stone's representation theorem, or of the Boolean prime ideal theorem on which it relies: there are enough models.

The doctrinal G\"odel's completeness theorem in the case without equality follows from a result by Makkai \cite[Thm.~2.1]{Makkai1993} (with the notational difference that he works in the fibrational setting).
In the elementary case, Makkai's fibrational completeness theorem with equality in \cite{Makkai1993} does not apply directly under our assumptions: it requires a finitely complete base and the full Beck--Chevalley condition, whereas we assume only finite products and the Beck--Chevalley conditions associated with quantification and equality. Our assumptions are strictly weaker, even in the Boolean setting; see \cite[Thm.~3.5]{AbbadiniGuffanti-OnBC}.
The doctrinal Gödel’s completeness theorem with equality can nevertheless be recovered indirectly by passing to the associated syntactic Boolean category and applying categorical completeness \cite[Thm.~3.5.5]{MakkaiReyes1977}; see also \cite{Lawvere1967,Volger1975}.
The proofs we present here---both in the case without and with equality---are self-contained and entirely within the doctrinal formalism. We follow Henkin's proof strategy:
one extends the theory to a maximal consistent one, extends the language with witnesses for provable existential statements, and reiterates this process $\omega$ times; the desired model is then obtained by taking the ``algebra of terms'' of the extended theory in the expanded language.
The completeness theorem in the case with equality follows from the one without equality by quotienting a not-necessarily-elementary model by the equivalence relation induced by equality.

As a side technical note, in this paper models are allowed to be empty. 
This reflects the fact that we consider first-order calculus \emph{with contexts}: indeed, the calculus with contexts is well encoded by first-order Boolean doctrines, where the Boolean algebras of formulas are indexed by their set of free variables.

We conclude the paper with a consequence of the completeness theorem for first-order Boolean doctrines: the fiberwise Stone dual of a first-order Boolean doctrine is its \emph{type space functor}. On objects, in the syntactic case this says that the Stone dual of the Boolean algebra of formulas in a context $X$ is the Stone space of $X$-pointed models modulo elementary equivalence.
The underlying idea that the dual of the algebra of first-order formulas is the space of (equivalence classes of) models is contained in some form in the literature; see, e.g., \cite{Makkai1987}, \cite{AwodeyForssell2013}, \cite{Haykazyan2019}, \cite{Kamsma2023}, \cite[Example 4.1]{vanGoolMarques2024}, \cite[Rem.~5.6]{vanGoolWrigley}.
A close formulation to ours is stated, in \cite[Sec.~2.2]{GehrkeJaklEtAL2023}, as a reformulation of G\"odel's completeness theorem. Specializing our theorem---which is formulated for \emph{arbitrary} first-order Boolean doctrines---to a syntactic doctrine recovers it exactly.

\makeatletter
\begingroup
\let\addcontentsline\@gobblethree
\subsection*{Further literature comparison}

A doctrinal version of G\"odel's completeness theorem was obtained by the second author in her doctoral thesis \cite{Guffanti2023}. 
The main difference from this work is that models are required to be non-empty there; accordingly, the hypotheses of G\"odel's completeness theorem were slightly stronger than what we call consistency: while we require only the fiber at the terminal object to be non-trivial in order for a model to exist, \cite{Guffanti2023} requires \emph{all} fibers to be non-trivial.
This difference has an impact on the proofs; indeed, the proof in \cite{Guffanti2023} follows more closely Henkin's original proof, where one first adds to the language infinitely many constants at once (possibly more than needed), and afterwards adds the corresponding witnessing axioms.

A different, dual approach to completeness was recently developed by van Gool and Marquès in the framework of polyadic spaces \cite{vanGoolMarques2024}. 
Their result is more general in certain directions: it applies to doctrines with fibers that are \emph{bounded distributive lattices}, rather than just Boolean algebras, and, moreover, it yields stronger model-theoretic conclusions.
On the other hand, it assumes  the base category to have \emph{all pullbacks} and assumes the \emph{full} Beck--Chevalley condition.
Finally, their proof is formulated dually in terms of polyadic spaces, whereas ours follows the classical Henkin construction more directly.

\endgroup
\makeatother

\section{Preliminaries: first-order Boolean doctrines}

\begin{notation}[Standing notation]\hfill
    \begin{enumerate}
        \item $\N$ denotes the set of natural numbers, including $0$.
        
        \item $\BA$ denotes the category of Boolean algebras and Boolean homomorphisms.
        
        \item $\Pos$ denotes the category of partially ordered sets and order-preserving functions.
        
        \item We let $\tmn_\C$ (or simply $\tmn$) denote the terminal object (when it exists) of a category $\C$.
        For $X \in \C$, we denote by $!_X$ the unique morphism from $X$ to $\tmn$.
    \end{enumerate}
    
\end{notation}

\subsection{First-order Boolean doctrines}

\begin{definition}[First-order Boolean doctrine]\label{d:bool_ex_doc}
    Given a category $\C$ with finite products, a \emph{first-order Boolean doctrine over $\C$} is a functor $\P \colon \C\op \to  \BA$ with the following properties.
    \begin{enumerate}
        \item {(Existential)} \label{i:h3}
        For all $X, Y \in \C$, letting $\pr^{X\times Y}_X \colon X \times Y \to X$ denote the projection onto the first coordinate, the function
        \[
        \P(\pr^{X\times Y}_X) \colon \P(X) \longrightarrow \P(X \times Y),
        \]
        viewed as an order-preserving map between the underlying posets, has a left adjoint $\ex{Y}{X}\!$ (which is not required to be a Boolean homomorphism).
        This means that for every $\alpha \in \P(X \times Y)$ there is a (necessarily unique) element $\ex{Y}{X} \alpha \in \P(X)$ such that, for every $\beta \in \P(X)$, 
        \[
             \ex{Y}{X} \alpha\leq\beta \ \text{ in } \P(X) \quad\iff \quad \alpha \leq \P(\pr^{X\times Y}_X)(\beta)  \ \text{ in }\P(X \times Y).
        \] 
        
        \item
        (Beck--Chevalley) For every morphism $f\colon X'\to X$ in $\C$ and every $Y \in \C$, the following square in $\Pos$ commutes. 
        \[
            \begin{tikzcd}
            {X} & {\P(X\times Y)} & {\P(X)} \\
            X' & {\P(X'\times Y)} & {\P(X')}
            \arrow["{\P(f\times\id_{Y})}", from=1-2, to=2-2, swap]
            \arrow["{\P(f)}", from=1-3, to=2-3]
            \arrow["{\ex{Y}{X'}\!}"', from=2-2, to=2-3]
            \arrow["{\ex{Y}{X}\!}", from=1-2, to=1-3]
            \arrow["f", from=2-1, to=1-1]
            \end{tikzcd}
        \]
    \end{enumerate}
\end{definition}
The category $\C$ is called the \emph{base category of $\P$}.
For $X\in \C$, $\P(X)$ is called the \emph{fiber} over $X$.
For a morphism $f\colon X'\to X$, the function $\P(f) \colon\P(X)\to \P(X')$ is called the \emph{reindexing along $f$}.

\begin{remark}[Existentiality $\Leftrightarrow$ universality]
    In \cref{d:bool_ex_doc}, we required the existence of the left adjoint $\ex{Y}{X}\! \colon \P(X \times Y)\to\P(X)$ of $\P(\pr^{X\times Y}_X)$ (with the Beck--Chevalley condition) for all $X,Y\in\C$.
    In this Boolean case, this condition is equivalent to the existence of the right adjoint $\fa{Y}{X}\!$ (with the Beck--Chevalley condition), as the two quantifiers are interdefinable: $\forall = \lnot \exists \lnot$ and $\exists = \lnot \forall \lnot$.
\end{remark}

\begin{example}[Syntactic doctrine]\label{fbf}
    Let $\T$ be a theory in a first-order language (without equality), and denote by $\F$ the set of function symbols.
    The \emph{syntactic doctrine of $\T$}
    \[
    \LT^{\T} \colon \Ctx_\mathbb{F} \op\longrightarrow \BA
    \]
    (where $\LT$ stands for ``Lindenbaum--Tarski algebra'') is the first-order Boolean doctrine defined as follows. 
    \begin{itemize}
        \item An object of the base category $\Ctx_\mathbb{F}$ is a finite list of distinct variables (also called a \emph{context}).
        \item A morphism in $\Ctx_\mathbb{F}$ from $\vec x=(x_1,\dots, x_n)$ to $\vec y=(y_1,\dots, y_m)$ is an $m$-tuple
        \begin{equation*}
                \bigl(t_1(\vec x),\dots,t_m(\vec x)\bigr)\colon (x_1,\dots, x_n) \longrightarrow (y_1,\dots, y_m)
        \end{equation*}
        of terms in the context $\vec x$.
        The composition of morphisms is given by simultaneous substitutions.
        \item In $\Ctx_\mathbb{F}$, the terminal object is the empty tuple $()$, and the product of two objects $(x_1,\dots,x_n)$ and $(y_1,\dots,y_m)$ is any tuple of distinct variables of length $n+m$, such as $(x_1,\dots,x_n,y_1,\dots,y_m)$ if $(x_1,\dots,x_n)$ and $(y_1,\dots,y_m)$ have no common variables.
        \item On objects, $\LT^{\T} \colon \Ctx_\mathbb{F}\op\to \BA$ maps a context $\vec x = (x_1, \dots, x_n)$ to the poset reflection of the preordered set of formulas whose free variables are within $\{x_1, \dots, x_n\}$, ordered by provable consequence $\vdash_\T$ in $\T$, according to which $\alpha$ is below $\beta$ if and only if the sequent $\alpha \Rightarrow_{\vec x} \beta$ is provable from $\T$; here, the subscript $\vec x$ in the sequent symbol $\Rightarrow$ means that the sequent is considered in the context $\vec x$.\footnote{We refer for example to \cite[Appendix~A]{AbbadiniGuffanti} for the rules of the sequent calculus with contexts for classical first-order logic. A consequence of the slight difference between the calculus \emph{with} contexts and the usual calculus \emph{without} contexts is that the sequent $\Rightarrow_{()} \exists x \top$ is in general not provable in the former, in accordance with admitting the empty set as a possible model.}
        
        \item
        On morphisms, $\LT^{\T}$ maps $\vec{t}(\vec{x}) \colon \vec{x}\to\vec{y}$ to the substitution $[\vec{t}(\vec{x})/\vec{y}] \colon \LT^{\T}(\vec y) \to \LT^{\T}(\vec x)$.
        
        \item Given finite lists of variables $\vec x$ and $\vec y$ with no common variables,
        and letting $\pr_1$ denote the projection morphism $\vec x \colon(\vec x,\vec y)\to\vec x$, the left adjoint to $\LT^{\T}(\pr_1)\colon\LT^{\T}(\vec x)\to\LT^{\T}(\vec x,\vec y)$ (which maps a formula $\alpha(\vec x)$ to itself but with $\vec y$ as dummy variables) is 
        \[
        \exists y_1\,\dots\,\exists y_m\colon\LT^{\T}(\vec x,\vec y)\longrightarrow\LT^{\T}(\vec x).
        \]
    \end{itemize}
\end{example}

\begin{definition}[Morphism of first-order Boolean doctrines] \label{d:ABA-morphism}
    A \emph{first-order Boolean doctrine morphism} from $\P\colon\C\op\to \BA$ to $\mathbf{R} \colon \cat{D}\op\to \BA$ is a pair $(M,\m)$ with $M\colon \C\to\cat{D}$ a functor preserving finite products and $\m \colon \P\to \mathbf{R}\circ M\op $ a natural transformation such that, for all $X,Y\in\C$, the following diagram commutes.
    \[
        \begin{tikzcd}
            \P(X\times Y)\arrow[d,"\ex{Y}{X}\!"']\arrow[r,"\m_{X\times Y}"] & \R\bigl(M(X)\times M(Y)\bigr)\arrow[d,"\ex{M(Y)}{M(X)}\!"]\\
            \P(X)\arrow[r,"\m_{X}"'] & \R(M(X))
        \end{tikzcd}
    \]
    (We implicitly identify $M(X \times Y)$ with $M(X) \times M(Y)$ via the canonical isomorphisms given by the preservation of products by $M$.)
\end{definition}

Given first-order Boolean doctrine morphisms $(M, \m) \colon \P \to \mathbf{R}$ and $(N,\mathfrak n) \colon \R \to \mathbf{S}$,
\[
    \begin{tikzcd}
	\C\op && {\cat{D}\op} && {\cat{E}\op} \\
	\\
	&& \BA
	\arrow["M\op", from=1-1, to=1-3]
	\arrow[""{name=0, anchor=center, inner sep=0}, "\P"', from=1-1, to=3-3]
	\arrow[""{name=1, anchor=center, inner sep=0}, "\R", from=1-3, to=3-3]
	\arrow["N\op", from=1-3, to=1-5]
	\arrow[""{name=2, anchor=center, inner sep=0}, "{\mathbf{S}}", from=1-5, to=3-3]
	\arrow["\m", curve={height=-6pt}, shorten <=8pt, shorten >=8pt, from=0, to=1]
	\arrow["\n", curve={height=-6pt}, shorten <=8pt, shorten >=8pt, from=1, to=2]
    \end{tikzcd}
\]
their composite $(N, \n) \circ (M, \m) \colon \P \to \mathbf{S}$ is the pair $(N \circ M, \mathfrak{n}\circ\m) \colon \P \to \mathbf{S}$, where $N \circ M$ is the composite of the functors between the base categories, and the component at $X\in \C$ of the natural transformation $\mathfrak{n}\circ\m$ is defined as $(\mathfrak{n}\circ\m)_X = \mathfrak{n}_{M(X)}\circ\m_X$, i.e.\ the composite of the following functions:
\[
\P(X)\xrightarrow{\m_X}\mathbf{R}(M(X))\xrightarrow{\mathfrak{n}_{M(X)}}\mathbf{S}(NM(X)).
\]

\begin{notation}[$\DoctABA$]
     We let $\DoctABA$ denote the category of first-order Boolean doctrines and first-order Boolean doctrine morphisms.
\end{notation}

Although 2-categorical aspects of doctrines would be natural, we omit them for simplicity.

\subsection{Models}
 The following additional example of a first-order Boolean doctrine is useful in defining models.

\begin{example}[Subset doctrine]
    The \emph{subset doctrine} is the contravariant power set functor $\pws \colon \Set\op \to \BA$, which maps a set $X$ to its power set $\pws(X)$, and a function $f\colon X'\to X$ to the preimage function
    \[
    \pws(f) \coloneqq f^{-1}[-] \colon \pws(X) \to \pws(X').
    \]
    It is a first-order Boolean doctrine. For $X, Y \in \Set$, the left adjoint $\ex{Y}{X}\!$ to $\pr_X^{-1}[-]\colon \pws(X)\to\pws(X\times Y)$ is the direct image function, which maps $S \in \pws(X\times Y)$ to $\pr_X[S] \in \pws(X)$.
\end{example}

\begin{definition}[Model of a first-order Boolean doctrine]\label{d:univ-bool-model}
    A \emph{model of a first-order Boolean doctrine $\P$} is a first-order Boolean doctrine morphism $(M,\m)$ from $\P$ to the subset doctrine  $\pws$.
    \[
        \begin{tikzcd}
            \C\op && \Set\op \\
            \\
            & \BA
            \arrow["M\op", from=1-1, to=1-3]
            \arrow[""{name=0, anchor=center, inner sep=0}, "\P"', from=1-1, to=3-2]
            \arrow[""{name=1, anchor=center, inner sep=0}, "{\pws}", from=1-3, to=3-2]
            \arrow["\m", curve={height=-6pt}, shorten <=7pt, shorten >=7pt, from=0, to=1]
        \end{tikzcd}
    \]
\end{definition}

\begin{remark}\label{r:model-classical}
    In the syntactic context, a model $(M, \m)$ of $\LT^\mathcal{T}$ corresponds precisely to a model of the theory $\mathcal{T}$ in the classical sense.
    Indeed, the assignment of the functor $M$ on objects encodes the underlying set of the model, the assignment of $M$ on morphisms encodes the interpretation of the function symbols, and the natural transformation $\m$ encodes the interpretation of the relation symbols.
\end{remark}

Note that, in \cref{d:univ-bool-model}, the functor $M$ might assign the empty set to some objects of $\C$. In the syntactic context, this means that we allow the empty model.

\section{G\"odel's completeness theorem}

In this section we prove G\"odel's completeness theorem in doctrinal form (\cref{c:Godel} below).
Calling a first-order Boolean doctrine $\P$ \emph{consistent} if $\top_{\P(\tmn)} \neq \bot_{\P(\tmn)}$, it states
\begin{quote}
    Every consistent first-order Boolean doctrine over a small category has a model.
\end{quote}

\subsection{Henkinness and models}

Our goal is to build a model of a consistent first-order Boolean doctrine. Obtaining a model is easy if the doctrine satisfies a property called \emph{Henkinness} (\cref{d:Henkin}); in this case, one can take as a model the ``algebra of terms'' (\cref{l:Henkin} below).
This is similar to Henkin's proof of G\"odel's completeness theorem (\cite{Henkin1949}).

\begin{notation}[Existential closure]\label{n:global-exists}
    Let $\P\colon \C\op\to \BA$ be a first-order Boolean doctrine.
    For every $X\in\C$ and $\alpha\in \P(X)$, we will be interested in the ``existential closure'' of $\alpha$, which is an element of $\P(\tmn)$. 
    Under the identification $X \cong \tmn \times X$, we can express this element of $\P(\tmn)$ as $\ex{X}{\tmn}\alpha$. In other words, $\ex{X}{\tmn}\!\colon\P(X)\to\P(\tmn)$ is the left adjoint of $\P(!_X)\colon \P(\tmn)\to\P(X)$.
    \[
    \begin{tikzcd}
    	{\P(\tmn\times X)} && {\P(\tmn)} \\
    	\\
    	&& {\P(X)}
    	\arrow[""{name=0, anchor=center, inner sep=0}, "{\ex{X}{\tmn}}", shift left=2, from=1-1, to=1-3]
    	\arrow["\cong"{description}, from=1-1, to=3-3]
    	\arrow[""{name=1, anchor=center, inner sep=0}, "{\P(\pr^{\tmn\times X}_\tmn)}", shift left=2, from=1-3, to=1-1]
    	\arrow[""{name=2, anchor=center, inner sep=0}, "{\P(!_X)}", shift left=2, from=1-3, to=3-3]
    	\arrow[""{name=3, anchor=center, inner sep=0}, "{\ex{X}{\tmn}}", shift left=2, from=3-3, to=1-3]
    	\arrow["\bot"{description}, between={0.2}{0.8}, Rightarrow, from=0, to=1]
    	\arrow["\dashv"{description}, between={0.2}{0.8}, Rightarrow, from=3, to=2]
    \end{tikzcd}
    \]

    Moreover, the Beck--Chevalley condition for a morphism $c\colon \tmn\to X$ in $\C$ and $Y\in\C$ can be rewritten as
    \begin{equation} \label{eq:Beck-Chevalley-for-ex-closure}
        \begin{tikzcd}
        {X} && {\P(X\times Y)} & {\P(X)} \\
        \tmn && {\P(Y)} & {\P(\tmn).}
        \arrow["{\P(\ple{c \circ {!_Y},\id_Y})}", from=1-3, to=2-3, swap]
        \arrow["{\P(c)}", from=1-4, to=2-4]
        \arrow["{\ex{Y}{\tmn}}"', from=2-3, to=2-4]
        \arrow["{\ex{Y}{X}\!}", from=1-3, to=1-4]
        \arrow["c", from=2-1, to=1-1]
        \end{tikzcd}
    \end{equation}

\end{notation}

\begin{definition}[Henkin\footnote{In the syntactic case, our definition corresponds precisely to items i) and ii) at p.~163 in Henkin's paper \cite{Henkin1949}. Indeed, in the syntactic setting, \eqref{i:henkin-1} corresponds to asking that the theory $\T$ is maximal consistent, and \eqref{i:henkin-2} corresponds to asking that for every $\alpha(x)$ such that $\T\vdash \exists x\,\alpha(x)$ there is a closed term $c$ such that $\T\vdash\alpha(c)$.}]\label{d:Henkin}
    A first-order Boolean doctrine $\P \colon \C\op \to \BA$ is \emph{Henkin} if
    \begin{enumerate}
        \item \label{i:henkin-1}
        $\P(\tmn)$ is the two-element Boolean algebra, and
        
        \item \label{i:henkin-2}
        for all $X \in \C$ and $\alpha \in \P(X)$, if $\ex{X}{\tmn} \alpha = \top_{\P(\tmn)}$ then there is $c \colon \tmn \to X$ such that $\P(c)(\alpha) = \top_{\P(\tmn)}$.
    \end{enumerate}
\end{definition}

\begin{remark}\label{r:Henkin-converse}
    The converse of the implication in \eqref{i:henkin-2} in the above definition holds in any first-order Boolean doctrine $\P\colon \C\op \to \BA$, i.e., for
    every $X \in \C$ and $\alpha \in \P(X)$, if there is $c \colon \tmn \to X$ such that $\P(c)(\alpha) = \top_{\P(\tmn)}$, then $\ex{X}{\tmn} \alpha = \top_{\P(\tmn)}$.
    Indeed, by the unit of the adjunction $\ex{X}{\tmn}\dashv \P(!_X)$ at $\alpha$ we have $\alpha \le \P(!_X)\bigl(\ex{X}{\tmn}\alpha\bigr)$.
    Applying $\P(c)$ to this inequality and using functoriality of $\P$,
    \[
    \top_{\P(\tmn)}=\P(c)(\alpha)\le \P(c)\Bigl(\P(!_X)\bigl(\ex{X}{\tmn}\alpha\bigr)\Bigr)
    = \P(!_X\circ c)\bigl(\ex{X}{\tmn}\alpha\bigr)
    = \P(\id_{\tmn})\bigl(\ex{X}{\tmn}\alpha\bigr)
    = \ex{X}{\tmn}\alpha,
    \]
    and hence $\ex{X}{\tmn}\alpha=\top_{\P(\tmn)}$.
\end{remark}

\begin{lemma}[Henkinness gives the ``term'' model] \label{l:Henkin}
    Every Henkin first-order Boolean doctrine $\P \colon \C\op \to \BA$ has a model $(M, \m)$, defined by setting $M$ as $\Hom(\tmn, -) \colon \C \to \Set$ and $\m \colon \P \to \pws\circ M\op$ as the natural transformation whose component at $X \in \C$ is
    \begin{align*}
        \m_X \colon \P(X) & \longrightarrow \pws(\Hom(\tmn,X))\\
        \alpha & \longmapsto \{c \colon \tmn \to X \mid \P(c)(\alpha) = \top_{\P(\tmn)}\}.
    \end{align*}
\end{lemma}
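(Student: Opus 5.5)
We verify in turn that $M$ preserves finite products, that each $\m_X$ is a Boolean homomorphism, that $\m$ is natural, and that $\m$ commutes with the existential quantifiers.

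\textbf{Products.} The representable functor $\Hom(\tmn,-)\colon\C\to\Set$ preserves all limits that exist, in particular finite products. Concretely, $\Hom(\tmn,X\times Y)\cong\Hom(\tmn,X)\times\Hom(\tmn,Y)$ via $c\mapsto(\pr_X\circ c,\pr_Y\circ c)$, with inverse $(a,b)\mapsto\ple{a,b}$, and $\Hom(\tmn,\tmn)$ is a singleton.

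\textbf{Boolean homomorphisms.} Since $\P(\tmn)=\{\bot,\top\}$ by \eqref{i:henkin-1}, each $\P(c)\colon\P(X)\to\P(\tmn)=\mathbf 2$ is a Boolean homomorphism. Hence $\alpha\mapsto\{c\mid\P(c)(\alpha)=\top\}$ sends $\top$ to everything and $\wedge$ to $\cap$. It also sends $\lnot$ to complement, because $\P(c)(\lnot\alpha)=\top$ iff $\P(c)(\alpha)=\bot$ iff $\P(c)(\alpha)\neq\top$, using that $\mathbf 2$ has exactly two elements. This is the only place where two-valuedness is used, and it is essential.

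\textbf{Naturality.} For $f\colon X'\to X$ and $\alpha\in\P(X)$, we need $\m_{X'}(\P(f)\alpha)=M(f)^{-1}[\m_X(\alpha)]$, which unwinds to $\P(c')(\P(f)\alpha)=\P(f\circ c')(\alpha)$. This is functoriality of $\P$.

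\textbf{Existential condition.} This is the main step. Fix $\alpha\in\P(X\times Y)$ and $c\colon\tmn\to X$, and identify $M(X\times Y)$ with $M(X)\times M(Y)$ as above. We must show
\[
\P(c)\bigl(\ex{Y}{X}\alpha\bigr)=\top \iff \text{there is } d\colon\tmn\to Y \text{ with } \P(\ple{c,d})(\alpha)=\top .
\]
By Beck--Chevalley in the form \eqref{eq:Beck-Chevalley-for-ex-closure}, $\P(c)\bigl(\ex{Y}{X}\alpha\bigr)=\ex{Y}{\tmn}\beta$, where $\beta\coloneqq\P(\ple{c\circ{!_Y},\id_Y})(\alpha)\in\P(Y)$.

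For ($\Rightarrow$): if $\ex{Y}{\tmn}\beta=\top$, Henkinness \eqref{i:henkin-2} provides $d\colon\tmn\to Y$ with $\P(d)(\beta)=\top$. Now $\ple{c\circ{!_Y},\id_Y}\circ d=\ple{c,d}$, since ${!_Y}\circ d=\id_\tmn$, so $\P(\ple{c,d})(\alpha)=\P(d)(\beta)=\top$.

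For ($\Leftarrow$): given such a $d$, the same identity gives $\P(d)(\beta)=\top$, and \cref{r:Henkin-converse} yields $\ex{Y}{\tmn}\beta=\top$.

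The main obstacle is bookkeeping: one must check that the product identification and the Beck--Chevalley square are oriented compatibly, so that the pairs $(c,d)$ lying over $c$ in the direct image correspond exactly to the morphisms $\ple{c,d}$. The argument reduces to the composite identity $\ple{c\circ{!_Y},\id_Y}\circ d=\ple{c,d}$, which holds by the universal property of the product.
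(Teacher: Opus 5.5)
Your proposal is correct and follows the paper's proof: representability for product preservation, the fact that $\P(\tmn)=2$ exactly for negation, functoriality for naturality, and, for the existential condition, Beck--Chevalley in the form \eqref{eq:Beck-Chevalley-for-ex-closure} combined with Henkinness, \cref{r:Henkin-converse}, and the identity $\ple{c\circ{!_Y},\id_Y}\circ d=\ple{c,d}$. One small wording point: each $\P(c)$ is a Boolean homomorphism simply because $\P$ lands in $\BA$, not because $\P(\tmn)=2$; as you note afterwards, the two-valuedness is only needed for the negation step.
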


\begin{proof}
    The functor $M \colon \C \to \Set$ clearly preserves finite products (as any representable functor does).
   
    Let us prove that $\m_X$ is a Boolean homomorphism for every $X\in\C$.
    From the fact that $\P(c)$ is a Boolean homomorphism for each $c \colon \tmn \to X$, it is easily seen that $\m_X$ preserves $\top$ and $\land$.
    To prove that $\m_X$ preserves $\lnot$, one observes 
    \[c\in \m_X(\lnot\alpha)\iff \P(c)(\lnot\alpha)=\top \iff \P(c)(\alpha)=\bot \iffexpl{since $\P(\tmn) = 2$} \P(c)(\alpha)\neq \top  \iff c\notin \m_X(\alpha).
    \]
    This concludes the proof that $\m_X$ is a Boolean homomorphism.
    
    The naturality of $\m$ holds because, for all $X, X' \in \C$, $\alpha\in \P(X)$ and all morphisms $f\colon X'\to X$ and $c\colon\tmn\to X'$ in $\C$,
    \begin{align*}
        c\in \m_{X'}\bigl(\P(f)(\alpha)\bigr)
        & \iff \P(c)\bigl(\P(f)(\alpha)\bigr)=\top_{\P(\tmn)}\\
        & \iff\P(f \circ c)(\alpha)=\top_{\P(\tmn)}\\
        & \iff f \circ c\in\m_X(\alpha)\\
        & \iff c\in(f\circ - )^{-1}[\m_{X}(\alpha)].
    \end{align*}

    Let us prove that $\m$ preserves the existential quantifier.
    For $\alpha\in\P(X\times Y)$,
    we shall prove
    \[
    \m_X\bigl(\ex{Y}{X}\alpha\bigr)=\{x \in M(X) \mid \text{there is }y\in M(Y)\text{ s.t.\ } (x,y)\in \m_{X\times Y}(\alpha)\},
    \]
    i.e.
    \begin{equation*}
        \{c \colon \tmn \to X \mid \P(c)(\ex{Y}{X}\alpha) = \top_{\P(\tmn)}\} = \{c \colon \tmn \to X \mid \text{there is }d\colon \tmn \to Y\text{ s.t.\ } \P(\ple{c,d})(\alpha) = \top_{\P(\tmn)}\}.
    \end{equation*}
    (In writing $(x,y)\in \m_{X\times Y}(\alpha)$, we tacitly identify $M(X\times Y)$ and $M(X)\times M(Y)$, i.e., $\Hom(\tmn,X\times Y)$ and $\Hom(\tmn,X)\times\Hom(\tmn,Y)$). This holds because, for every $c \colon \tmn \to X$,
    \begin{align*}
        &\P(c)\bigl(\ex{Y}{X}\alpha\bigr) = \top_{\P(\tmn)} \\
        &\iff \ex{Y}{\tmn}\P(\ple{c \circ {!_Y},\id_Y})(\alpha) = \top_{\P(\tmn)} &&\text{(by Beck--Chevalley as in Not.~\ref{n:global-exists})}\\
        &\iff \text{there is $d\colon \tmn \to Y$ s.t.\ } \P(d)\bigl(\P(\ple{c \circ {!_Y},\id_Y})(\alpha)\bigr)=\top_{\P(\tmn)}&&\text{(by Def.~\ref{d:Henkin} and Rem.~\ref{r:Henkin-converse})}\\
        &\iff \text{there is $d\colon \tmn \to Y$ s.t.\ } \P(\ple{c,d})(\alpha)=\top_{\P(\tmn)}&&\text{(since $\ple{c \circ {!_Y},\id_Y}\circ d=\ple{c,d}$)}. \qedhere 
    \end{align*}
\end{proof}

\subsection{Adding a constant to a doctrine} \label{ss:const}
In \cref{l:Henkin} we saw how to obtain a model out of a Henkin first-order Boolean doctrine.
To get a model for an arbitrary consistent first-order Boolean doctrine $\P$, we produce a Henkin first-order Boolean doctrine out of $\P$.
Since Henkinness requires an abundance of term-definable constants, the process of producing a Henkin first-order Boolean doctrine out of $\P$ requires adding constants to $\P$.
With this in mind, in this subsection we review from \cite{GuffConst} the construction that freely adds to a first-order Boolean doctrine $\P\colon \C\op \to \BA$ a constant of type $S \in \C$.

Let $\P\colon \C\op \to \BA$ be a first-order Boolean doctrine, and $S \in \C$.
Let $\C_S$ be the Kleisli category for the reader comonad $S \times - \colon \C \to \C$:
\begin{itemize}

    \item 
    The objects of $\C_S$ are those of $\C$. 
    
    \item 
    A morphism $f \colon X \rightsquigarrow Y$ in $\C_S$ is a morphism $f \colon S \times X \to Y$ in $\C$. (We use the notation $\rightsquigarrow$ to denote morphisms in $\C_S$.)
    
    \item
    The composite of $f \colon X \rightsquigarrow Y$ and $g \colon Y \rightsquigarrow Z$ is $g \circ \ple{\pr^{S\times X}_S,f} \colon X \rightsquigarrow Z$:
    \[S \times X \xrightarrow{\ple{\pr^{S\times X}_S,f}} S \times Y \xlongrightarrow{g} Z.\]
    
    \item
    The identity on an object $X$ in $\C_S$ is the morphism $X \rightsquigarrow X$ corresponding to the projection onto $X$ in $\C$:
    \[S \times X \xrightarrow{\pr^{S\times X}_X}X.\]
    
\end{itemize}
We remark that, in $\C_S$, there is a morphism $\tmn \rightsquigarrow S$ that corresponds to $\id_S\colon S\to S$ upon choosing $S$ as a product of $S$ and $\tmn$.

The functor $\P_S \colon {\C_S}\op\to\BA$ is defined as follows:
for every $X\in\C_S$, the Boolean algebra $\P_S(X)$ is $\P(S\times X)$; for every morphism $f\colon X\rightsquigarrow Y$ in $\C_S$ (i.e., a morphism $f \colon S \times X \to Y$ in $\C$), the reindexing $\P_S(f)$ is $\P(\ple{ \pr^{S\times X}_S,f })$.
\begin{center}
    \begin{tikzcd}[row sep = 1em]
    \C_S\op\arrow[r, "\P_S"]    &   \BA \\ 
    Y       &   \P(S\times Y)\arrow[dd,"{\P(\ple{ \pr^{S\times X}_S,f })}"]\\ \\
    X\arrow[uu,"f"', squiggly] &     \P(S\times X)
    \end{tikzcd}
\end{center}
For $X,Y\in \C_S$, the existential quantifier $\ex{Y}{X} \colon \P_S(X\times Y)\to \P_S(X)$ in $\P_S$ is the existential quantifier
\[
\ex{Y}{S\times X}\!\colon \P(S\times X\times Y) \longrightarrow \P(S\times X)
\]
in $\P$.

The first-order Boolean doctrine $\P_S$ comes with a canonical first-order Boolean doctrine morphism $(L_S,\mathfrak{l}_S)\colon \P \to \P_S$. The functor $L_S\colon \C \to \C_S$ is the identity on objects and maps a morphism $f \colon X \to Y$ to the morphism $ f\circ \pr^{S\times X}_X\colon X \rightsquigarrow Y$.
The component at an object $X$ of the natural transformation $\mathfrak{l}_S$ is
\begin{align*}
    (\mathfrak{l}_S)_X \colon \P(X) & \longrightarrow \P_S(X)=\P(S\times X)\\
                            \alpha &\longmapsto      \P(\pr^{S\times X}_X)(\alpha).
\end{align*}

All these facts are proved in \cite[Sec.~5]{GuffConst}.

The first-order Boolean doctrine morphism $(L_S,\mathfrak{l}_S)\colon \P \to \P_S$ and the morphism $\id_S\colon \tmn \rightsquigarrow S$ in $\C_S$ have the following universal property.
\begin{theorem}[{\cite[Cor.~6.7 and Thm.~6.3(iii,vii)]{GuffConst}}] \label{t:univ-prop}
    Let $\P$ be a first-order Boolean doctrine over $\C$, let $S\in\C$ and let $(L_S,\mathfrak{l}_S)\colon \P \to \P_S$ be the construction that freely adds a constant of type $S$ to $\P$. 
    For every first-order Boolean doctrine $\mathbf{R}\colon\cat{D}\op\to\BA$, first-order Boolean doctrine morphism $(M,\m)\colon \P\to \mathbf{R}$ and morphism $c \colon \tmn_\cat{D} \to M(S)$ in $\cat{D}$, there is a unique first-order Boolean doctrine morphism $(N,\n) \colon \P_S \to \mathbf{R}$ satisfying $(M,\m)=(N,\n)\circ(L_S,\mathfrak{l}_S)$ and $N(\id_S\colon \tmn \rightsquigarrow S)=c$.
\end{theorem}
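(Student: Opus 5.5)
We will define $(N,\n)$ explicitly and then check uniqueness; the definitions are essentially forced. On objects, $N(X) \coloneqq M(X)$, since $L_S$ is the identity on objects and we need $N\circ L_S = M$. For a morphism $f\colon X\rightsquigarrow Y$, i.e.\ $f\colon S\times X\to Y$ in $\C$, we set
\[
N(f)\coloneqq M(f)\circ\ple{c\circ{!_{M(X)}},\id_{M(X)}}\colon M(X)\longrightarrow M(S)\times M(X)\cong M(S\times X)\longrightarrow M(Y).
\]
Similarly, for $\alpha\in\P_S(X)=\P(S\times X)$ we set
\[
\n_X(\alpha)\coloneqq \R\bigl(\ple{c\circ{!_{M(X)}},\id_{M(X)}}\bigr)\bigl(\m_{S\times X}(\alpha)\bigr)\in\R(M(X)).
\]
The idea is that a morphism in the Kleisli category is a $\C$-morphism with an extra $S$-parameter, and we plug $c$ into that parameter.

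Existence then splits into routine checks, which we carry out in order.
\begin{enumerate}
\item $N$ is a functor. Identities go to $M(\pr_X)\circ\ple{c\,!,\id}=\id$. Preservation of Kleisli composition follows by expanding $g\circ\ple{\pr_S,f}$ and using that $M$ preserves products and pairing.
\item $N$ preserves finite products. This holds because $N\circ L_S=M$ does and the product projections of $\C_S$ are the $L_S$-images of those of $\C$.
\item $\n_X$ is a Boolean homomorphism, being a composite of two such.
\item $\n$ is natural. This uses naturality of $\m$ along $\ple{\pr_S,f}\colon S\times X\to S\times Y$ together with the identity
\[
M(\ple{\pr_S,f})\circ\ple{c\,!,\id}=\ple{c\,!,N(f)}
\]
in $\cat D$.
\item $\n$ commutes with the existential quantifiers. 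Since $\m$ commutes with $\ex{Y}{S\times X}$, it remains to move the reindexing along $\ple{c\,!,\id}\times\id_{M(Y)}$ past $\ex{M(Y)}{}$, which is exactly the Beck--Chevalley condition in $\R$ for the morphism $\ple{c\,!,\id}\colon M(X)\to M(S)\times M(X)$.
\item The factorization and the constant. We have $N(L_S f)=M(f\circ\pr_X)\circ\ple{c\,!,\id}=M(f)$, and $\n_X\bigl((\mathfrak l_S)_X\alpha\bigr)=\R(\ple{c\,!,\id})\bigl(\R(M\pr_X)(\m_X\alpha)\bigr)=\m_X(\alpha)$ by naturality of $\m$. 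Finally, $N(\id_S\colon\tmn\rightsquigarrow S)=M(\id_S)\circ\ple{c,\id_\tmn}=c$, up to the identification $S\cong S\times\tmn$.
\end{enumerate}

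For uniqueness, we observe that every Kleisli morphism $f\colon X\rightsquigarrow Y$ factors in $\C_S$ as
\[
f = L_S(f)\circ\ple{\id_S\circ{!_X},\id_X},
\]
where $\ple{\id_S\circ !_X,\id_X}\colon X\rightsquigarrow S\times X$ is built from the generic constant and identities in $\C_S$. Any product-preserving $N$ with $N\circ L_S=M$ and $N(\id_S)=c$ must therefore send $f$ to $M(f)\circ\ple{c\,!,\id}$. Likewise, $\alpha\in\P_S(X)$ equals $\P_S(\ple{\id_S\,!,\id_X})\bigl((\mathfrak l_S)_{S\times X}\alpha\bigr)$, so naturality of $\n$ forces the formula for $\n_X$.

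The main obstacle is bookkeeping with the canonical isomorphisms $M(S\times X)\cong M(S)\times M(X)$ and $S\cong S\times\tmn$, especially in the Beck--Chevalley step and in the Kleisli-composition identity. We would handle these by fixing the identifications once and for all and verifying the pairing identities componentwise via projections.
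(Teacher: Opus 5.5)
Your proof is correct. The paper does not prove this theorem but imports it from \cite{GuffConst}. Your argument is a complete, self-contained proof of that cited result: you evaluate the $S$-parameter at $c$ via $\ple{c\circ !,\id}$, and you obtain preservation of $\exists$ from Beck--Chevalley in $\R$ along that morphism. Uniqueness is then forced by the factorizations $f=L_S(f)\circ\ple{\id_S\circ !_X,\id_X}$ in $\C_S$ and $\alpha=\P_S(\ple{\id_S\circ !_X,\id_X})\bigl((\mathfrak{l}_S)_{S\times X}\alpha\bigr)$.
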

(Note that the expression $N(\id_S\colon \tmn \rightsquigarrow S)=c$ type-checks because $L_S$ is the identity on objects and hence $M(S) = N(L_S(S)) = N(S)$.)

\subsection{Adding a (possibly infinite) set of constants to a doctrine}\label{ss:add-infinite-constant}

In the previous subsection, we recalled how to add a single constant.
Here, we explain, given a family of objects $(S_i)_{i\in I}$, how to simultaneously add a constant of type $S_i$ for each $i \in I$.
Drawing on the fact that $I$ is the directed colimit of its finite subsets, the desired construction is obtained as a directed colimit of the construction that adds a constant of type $\prod_{i \in \X} S_i$ for $\X$ ranging among finite subsets of $I$. (Recall that $\C$ has finite products but may lack arbitrary ones.)
This construction already appeared in the second author's doctoral thesis \cite{Guffanti2023}.

Let $(S_i)_{i \in I}$ be a family of objects of $\C$, indexed by a set $I$.
Let $\mathcal{A}$ be the set of finite subsets of $I$, partially ordered by inclusion. The poset $\mathcal{A}$ is directed.
For $\X \in \mathcal{A}$, we write $H_{\X} \coloneqq \prod_{i \in \X} S_i$.

We define an $\mathcal{A}$-shaped diagram in $\DoctABA$:
\[
\begin{tikzcd}
    \mathcal{A} && \DoctABA \\
    {\X} && {{\P}_{H_{\X}}\colon \C_{H_{\X}}\op\to\BA} \\
    {\Y} && {{\P}_{H_{\Y}}\colon \C_{H_{\Y}}\op\to\BA}.
    \arrow[from=1-1, to=1-3, "D"]
    \arrow["\subseteq"{marking}, draw=none, from=2-1, to=3-1]
    \arrow[maps to, from=2-1, to=2-3]
    \arrow[maps to, from=3-1, to=3-3]
    \arrow["{(L_{\X\Y},\mathfrak{l}_{\X\Y})}", from=2-3, to=3-3]
\end{tikzcd}
\] 
\begin{itemize}
    \item (On objects:) for each $\X \in \mathcal{A}$, $D(\X)$ is the first-order Boolean doctrine ${\P}_{H_{\X}}\colon {\C_{H_{\X}}\op\to\BA}$ obtained from ${\P}$ by adding a constant of type $H_{\X}$; in particular, $D(\varnothing) \cong \P$;
    \item (On morphisms:)
    for every $\X,\Y \in \mathcal{A}$ with $\X \subseteq \Y$, $D(\X\subseteq \Y)$ is the unique first-order Boolean doctrine morphism  $(L_{\X\Y},\mathfrak{l}_{\X\Y})\colon \P_{H_{\X}}\to\P_{H_{\Y}}$ such that
    \[    (L_{\X\Y},\mathfrak{l}_{\X\Y})\circ(L_{H_\X},\mathfrak{l}_{H_\X})=(L_{H_\Y},\mathfrak{l}_{H_\Y})\quad\text{ and }\quad L_{\X\Y}(\id_{H_{\X}}\colon \tmn {\overset{H_{\X}}{\rightsquigarrow} }H_{\X})= \pr_{H_{\X}}^{H_{\Y}}\colon \tmn\overset{H_{\Y}}{\rightsquigarrow}H_{\X},
    \]
    which is defined by the universal property of $\P_{H_{\X}}$ (see \cref{t:univ-prop}). We denote by $\overset{S}{\rightsquigarrow}$ an arrow in the category $\C_{S}$.
    \[
    \begin{tikzcd}
    	\P & {\P_{H_\X}} \\
    	& {\P_{H_\Y}}
    	\arrow["{(L_{H_\X},\mathfrak{l}_{H_\X})}"yshift=2pt, from=1-1, to=1-2]
    	\arrow["{(L_{H_\Y},\mathfrak{l}_{H_\Y})}"', from=1-1, to=2-2]
    	\arrow["{(L_{\X\Y},\mathfrak{l}_{\X\Y})}", from=1-2, to=2-2]
    \end{tikzcd}
    \]
    In particular, for every $\X\in\mathcal{A}$, $D(\varnothing\subseteq \X)$ is the canonical first-order Boolean doctrine morphism
    $(L_{H_\X},\mathfrak{l}_{H_\X})\colon {\P}\to \P_{H_{\X}}$. 
\end{itemize}

Let ${\P'}\colon {\C'}\op\to\BA$ be the colimit of the directed diagram $D$ in $\DoctABA$, which is computed as follows (see \cite[Section~1.3]{Guffanti2023}).
\begin{itemize}
    \item (Objects) The objects in the base category $\C'$ are those of $\C$, since for all ${\X}, {\Y} \in \mathcal{A}$ the functor $L_{{\X}{\Y}}$ is the identity on objects.
    
    \item (Morphisms)
    A morphism $[({\X},f)]_{A,B}$ in $\C'$ from $A$ to $B$---written as $[({\X},f)]_{A,B} \colon A\dashrightarrow B$---is the equivalence class of a pair $({\X},f)$ where ${\X}\in \mathcal{A}$ and $f \colon H_{{\X}}\times A \to B$ is a morphism in $\C$, with respect to the equivalence relation $\sim_{A,B}$ defined by setting 
        \[( {\X},f \colon H_{{\X}} \times A \to B) \sim_{A,B} ({\Y},g \colon H_{{\Y}} \times A \to B)
        \]
    if and only if there is ${\Z}\in \mathcal{A}$ with ${\X}\subseteq{\Z}\supseteq{\Y}$ making the following diagram commute.
    \[
        \begin{tikzcd}
            && {H_{ \X}\times A} \\
            {H_{ \Z}\times A} && 
            && B \\
            && {H_{ \Y}\times A}
            \arrow["{\pr_{H_{{\X}}}^{H_{{\Z}}}\times \id_{A}}", from=2-1, to=1-3]
            \arrow["f", from=1-3, to=2-5]
            \arrow["\pr^{H_{ \Z}}_{H_{ \Y}}\times \id_{A}"', from=2-1, to=3-3]
            \arrow["{g}"', from=3-3, to=2-5]
        \end{tikzcd}
    \]
    We will write $[\X,f]_{A,B}$ instead of $[(\X,f)]_{A,B}$, suppressing the parentheses.

    \item (Fibers)
    For every object $A \in \C$, the fiber $\P'(A)$ is the colimit of $D$ in $\BA$ restricted to the fibers.     
    Explicitly, recalling that filtered colimits of Boolean algebras are computed in $\Set$, the Boolean algebra $\P'(A)$ is the set of equivalence classes $[( \X,\alpha)]_{A}$ of pairs $(\X,\alpha)$ where $ \X\in\mathcal{A}$ and $\alpha\in \P(H_{ \X}\times A)$
    with respect to the equivalence relation $\sim_A$ defined by setting
    \[
    {\bigl({\X},\alpha \in \P(H_{ \X}\times A)\bigr)} \sim_A {\bigl({\Y}, \beta \in \P(H_{ \Y}\times A)\bigr)}
    \]
    if and only if there is ${\Z}\in \mathcal{A}$ with ${\X}\subseteq{\Z}\supseteq{\Y}$ such that, in $\P(H_{ \Z}\times A)$,
    \[
    \P(\pr_{H_{{\X}} \times A}^{H_{{\Z}} \times A})(\alpha)=\P(\pr_{H_{{\Y}} \times A}^{H_{{\Z}} \times A})(\beta).
    \]
    We will write $[\X,\alpha]_A$ instead of $[(\X,\alpha)]_A$, suppressing the parentheses.
    
    \item (Reindexings)
    The reindexings are defined using common upper bounds: for $[{{\X}, f}]_{A,B}\colon A\dashrightarrow B$ and $[{{\Y},\psi}]_B\in\P'(B)$, we set
    \begin{equation}\label{eq:def-reindex-p1'}
        \P'([{{\X}, f}]_{A,B})([{\Y}, \psi]_B)=\bigl[{\Z},{\P(\ple{\pr^{H_{ \Z}\times A}_{H_{ \Y}},f\circ\pr^{H_{ \Z} \times A}_{H_{ \X} \times A}})(\psi)}\bigr]_A\in\P'(A),
    \end{equation}
    where ${\Z}$ is any element of $\mathcal{A}$ such that ${\X}\subseteq{\Z}\supseteq{\Y}$ (it does not depend on $\Z$).

    \[\begin{tikzcd}
	&  {H_{ \Z}\times A} & \\
	{H_{ \Z}} && {H_{ \X} \times A} \\
	& {H_{ \Y}\times B} \\
	{H_{ \Y}} && B
	\arrow[from=1-2, to=2-1, "\pr^{H_{ \Z} \times A}_{H_{ \Z}}"']
	\arrow[from=1-2, to=2-3, "\pr^{H_{ \Z} \times A}_{H_{ \X} \times A}"]
	\arrow[from=1-2, to=3-2, "\ple{\pr^{H_{ \Z}\times A}_{H_{ \Y}},f\circ\pr^{H_{ \Z} \times A}_{H_{ \X} \times A}}"{description}]
	\arrow[from=2-1, to=4-1, "\pr^{H_{ \Z}}_{H_{ \Y}}"']
	\arrow[from=2-3, to=4-3, "f"]
	\arrow[from=3-2, to=4-1, "\pr^{H_{ \Y} \times B}_{H_{ \Y}}"]
	\arrow[from=3-2, to=4-3, "\pr^{H_{ \Y} \times B}_{B}"']
\end{tikzcd}\]
    \end{itemize}

The \emph{construction that simultaneously adds to the first-order Boolean doctrine $\P$ a constant of type $S_i$ for each $i \in I$} is the colimit map 
\[
(R,\r)\colon\P\longrightarrow\P'
\]
from $D(\varnothing) = \P$ to the colimit $\P'$.
In particular, recalling that the objects of $\C$ and $\C'$ are the same, for every object $A \in \C$ we have $R(A) = A$, for every morphism $f \colon A \to B$ in $\C$ we have $R(f) = [\varnothing, f]_{A,B}$, and for every $A \in \C$ and $\alpha \in \P(A)$ we have $\r_A(\alpha) = [\varnothing, \alpha]_A$.
Moreover, for every $A \in \C$, $\alpha \in \P(A)$ and ${\X} \in \mathcal{A}$, 
\[
\r_A(\alpha) = \bigl[{\X},\P(\pr_{A}^{H_{{\X}} \times A})(\alpha)\bigr]_A.
\]

\subsection{Reaching Henkinness}
In this subsection, we show how to obtain a Henkin first-order Boolean doctrine from a consistent one.
This blends the process of quotienting by an ultrafilter and adding constants, in line with the two requirements in the definition of Henkinness.

\begin{remark}[{Filter-quotient construction (see, e.g., \cite[Sec.~1.5.1]{Guffanti2023})}]\label{r:quotient-doctrine-filter}
    Let $\P\colon\C\op\to\BA$ be a first-order Boolean doctrine and let $F$ be a filter of the Boolean algebra $\P(\tmn)$.
    Define, for each $X\in\C$, the following preorder on $\P(X)$: for all $\alpha,\beta\in\P(X)$, $\alpha\sqsubseteq^F_X\beta$ if and only if there is $\theta\in F$ such that $\P(!_X)(\theta)\land\alpha\leq\beta$ in $\P(X)$; equivalently, if and only if $\fa{X}{\tmn} (\alpha\to\beta) \in F$.
    Define the first-order Boolean doctrine $\P_{\!/F}\colon\C\op\to\BA$ as follows: for every object $X$, $\P_{\!/F}(X)$ is the poset reflection\footnote{The \emph{poset reflection} of a preordered set $A$ is the quotient $A/{\sim}$ where $a \sim b \Leftrightarrow (a\le b\ \text{and}\ b\le a)$, ordered by $[a]\le [b] \Leftrightarrow a\le b$.} of the preorder $\sqsubseteq^F_X$.
    It can be easily shown that, for every $X\in\C$, $\P_{\!/F}(X)$ is a Boolean algebra, and the quotient map $\pi_X\colon\P(X)\to\P_{\!/F}(X), \,\alpha\mapsto[\alpha]^F_X$ is a Boolean homomorphism.
    For every morphism $f\colon X\to Y$ in $\C$, the function $\P_{\!/F}(f)$ that sends $[\alpha]^F_Y\in \P_{\!/F}(Y)$ to $[\P(f)(\alpha)]^F_X\in\P_{\!/F}(X)$ is well-defined and a Boolean homomorphism.
    For every $X,Y\in\C$ and $[\alpha]^F_{X\times Y}\in\P_{\!/F}(X\times Y)$, it is easy to see that
    $\ex{Y}{X}[\alpha]^F_{X\times Y}\coloneqq[\ex{Y}{X}\alpha]^F_X\in{\P_{\!/F}}(X)$ defines the existential quantification.
    
    It follows that $\P_{\!/F}\colon\C\op\to\BA$ is a first-order Boolean doctrine, and $(\id_{\C},\pi)\colon\P\to\P_{\!/F}$ is a first-order Boolean doctrine morphism.
    
    Observe that the Boolean algebra $\P_{\!/F}(\tmn)$ is the quotient of the Boolean algebra $\P(\tmn)$ by the filter $F$ in the usual sense.
\end{remark}

\begin{lemma}\label{l:universal-join}
    Let $\P$ be a first-order Boolean doctrine over $\C$, let $X,Y\in\C$, let $\alpha,\gamma\in \P(X)$, and let $\beta \in \P(X\times Y)$. Then,
    \[
        \alpha \land  \ex{Y}{X} \beta \leq \gamma \text{ in $\P(X)$} \Longleftrightarrow \P(\pr^{X\times Y}_{X})(\alpha)\land \beta \leq \P(\pr^{X\times Y}_{X})(\gamma) \text{ in $\P(X \times Y)$}.
    \]
\end{lemma}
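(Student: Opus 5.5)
The plan is to derive the equivalence from Frobenius reciprocity, which we prove using the Boolean structure together with the adjunction $\ex{Y}{X}\dashv \P(\pr^{X\times Y}_X)$. Write $\pi\coloneqq\P(\pr^{X\times Y}_X)$, a Boolean homomorphism. The key step is to rewrite $\alpha\land\ex{Y}{X}\beta\le\gamma$ as $\ex{Y}{X}\beta\le\lnot\alpha\lor\gamma$, using the Boolean identity $a\land b\le c\iff b\le \lnot a\lor c$ in $\P(X)$.

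Next, the adjunction of \cref{d:bool_ex_doc}\eqref{i:h3} turns this into $\beta\le\pi(\lnot\alpha\lor\gamma)$ in $\P(X\times Y)$. Since $\pi$ is a Boolean homomorphism, $\pi(\lnot\alpha\lor\gamma)=\lnot\pi(\alpha)\lor\pi(\gamma)$. Applying the same Boolean identity in $\P(X\times Y)$, the inequality $\beta\le\lnot\pi(\alpha)\lor\pi(\gamma)$ is equivalent to $\pi(\alpha)\land\beta\le\pi(\gamma)$.

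Chaining these equivalences gives the statement. Every step is a two-sided equivalence, so both directions come at once. There is no real obstacle: the only point needing care is that the preservation of $\lnot$ and $\lor$ is used for $\pi$ (a reindexing, hence a Boolean homomorphism) and never for $\ex{Y}{X}$, which is not a homomorphism.
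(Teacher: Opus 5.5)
Your proof is correct and follows the paper's argument essentially step for step: the paper likewise rewrites the inequality as $\ex{Y}{X}\beta\le\alpha\to\gamma$, applies the adjunction, uses that the reindexing $\P(\pr^{X\times Y}_X)$ preserves $\to$, and rewrites back. The only difference is cosmetic: you write $\lnot\alpha\lor\gamma$ where the paper writes $\alpha\to\gamma$. Your opening mention of Frobenius reciprocity is unnecessary, since you never prove it separately; the direct chain of equivalences already establishes the statement.
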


\begin{proof}
    \begin{align*}
        &\alpha \land  \ex{Y}{X} \beta \leq \gamma&& \text{ in }\P(X)\\ 
        &\Longleftrightarrow   \ex{Y}{X} \beta \leq \alpha \to \gamma && \text{ in }\P(X )\\
        &\Longleftrightarrow  \beta \leq \P(\pr^{X\times Y}_{X})(\alpha \to\gamma)&& \text{ in }\P(X\times Y)\\
        &\Longleftrightarrow  \beta \leq \bigl(\P(\pr^{X\times Y}_{X})(\alpha)\bigr) \to \bigl(\P(\pr^{X\times Y}_{X})(\gamma)\bigr)&& \text{ in }\P(X\times Y)\\
        &\Longleftrightarrow \P(\pr^{X\times Y}_{X})(\alpha)\land \beta \leq \P(\pr^{X\times Y}_{X})(\gamma) &&\text{ in }\P(X \times Y).\qedhere
    \end{align*}
\end{proof}

\begin{lemma}[$\exists$ distributes over $\bigwedge$ with disjoint variables]\label{l:exists-meet}
    Let $\P$ be a first-order Boolean doctrine over $\C$, let $S,X_1,\dots,X_n\in \C$, and let $\alpha_i\in\P(S \times X_i)$ for $i=1,\dots,n$. Then,
    \[
        \bigwedge_{i=1}^n \ex{X_i}{S} \alpha_i
        =
        \ex{\Pi_j X_j}{S}\bigwedge_{i=1}^n \P(\pr^{S\times \Pi_j X_j}_{S \times X_i})(\alpha_i)
    \]
\end{lemma}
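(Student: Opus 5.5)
I will argue by induction on $n$, reducing to the case $n=2$ (the case $n=1$ is trivial since $\pr^{S\times X_1}_{S\times X_1}=\id$, and the case $n=0$ is $\top = \ex{\tmn}{S}\top$, which holds since $\P(\pr^{S\times\tmn}_S)$ is an isomorphism). For the inductive step, I set $X=\prod_{j<n}X_j$ and $\beta=\bigwedge_{i<n}\P(\pr^{S\times X}_{S\times X_i})(\alpha_i)\in\P(S\times X)$. The induction hypothesis rewrites $\bigwedge_{i<n}\ex{X_i}{S}\alpha_i$ as $\ex{X}{S}\beta$. The binary case then gives an element of $\P(S\times X\times X_n)$. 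Since reindexing is functorial and preserves meets, and the projections compose correctly, this element is exactly $\bigwedge_{i=1}^n\P(\pr^{S\times\Pi_jX_j}_{S\times X_i})(\alpha_i)$.

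For the binary case I need $\ex{X_1}{S}\alpha_1\land\ex{X_2}{S}\alpha_2=\ex{X_1\times X_2}{S}\bigl(\P(\pr_{S\times X_1})(\alpha_1)\land\P(\pr_{S\times X_2})(\alpha_2)\bigr)$. I will first establish Frobenius reciprocity as a consequence of \cref{l:universal-join}: for $\gamma\in\P(S)$ and $\beta\in\P(S\times Y)$,
\[
\gamma\land\ex{Y}{S}\beta=\ex{Y}{S}\bigl(\P(\pr^{S\times Y}_S)(\gamma)\land\beta\bigr).
\]
Indeed, for any $\delta\in\P(S)$, both sides are $\le\delta$ iff $\P(\pr_S)(\gamma)\land\beta\le\P(\pr_S)(\delta)$. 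For the left side this is \cref{l:universal-join}; for the right side it is the defining adjunction. By Yoneda in the poset, the two sides are equal.

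Next I decompose the quantifier over $X_1\times X_2$ into two steps: $\ex{X_1\times X_2}{S}=\ex{X_1}{S}\circ\ex{X_2}{S\times X_1}$. This holds because left adjoints compose and $\P(\pr^{S\times X_1\times X_2}_S)=\P(\pr^{S\times X_1}_S)\circ\P(\pr^{S\times X_1\times X_2}_{S\times X_1})$. I then compute, writing $\alpha_1'=\P(\pr_{S\times X_1})(\alpha_1)$:
\[
\ex{X_1}{S}\ex{X_2}{S\times X_1}\bigl(\alpha_1'\land\P(\pr_{S\times X_2})(\alpha_2)\bigr)
=\ex{X_1}{S}\Bigl(\alpha_1\land\ex{X_2}{S\times X_1}\P(\pr_{S\times X_2})(\alpha_2)\Bigr).
\]
This step uses Frobenius over the base $S\times X_1$. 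Here $\P(\pr_{S\times X_2})(\alpha_2)=\P(\pr^{S\times X_1}_S\times\id_{X_2})(\alpha_2)$, up to the canonical reordering iso. So by Beck--Chevalley applied to $f=\pr^{S\times X_1}_S$, we get $\ex{X_2}{S\times X_1}\P(f\times\id)(\alpha_2)=\P(\pr^{S\times X_1}_S)\bigl(\ex{X_2}{S}\alpha_2\bigr)$. One final application of Frobenius over base $S$ then yields $\ex{X_1}{S}\alpha_1\land\ex{X_2}{S}\alpha_2$.

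The main obstacle is bookkeeping rather than ideas. I must check that the projection $S\times X_1\times X_2\to S\times X_2$ really is identified with $\pr^{S\times X_1}_S\times\id_{X_2}$, so that Beck--Chevalley applies verbatim. I must also check that the product reassociations $S\times(X_1\times X_2)\cong(S\times X_1)\times X_2$ are compatible with the quantifiers. I will handle this by noting that quantifiers along isomorphic product presentations agree, since they are left adjoints to reindexings that differ by an isomorphism.
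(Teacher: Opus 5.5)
Your proof is correct and follows essentially the same route as the paper. You handle the trivial cases $n=0,1$, treat the binary case via \cref{l:universal-join} (which you repackage as Frobenius reciprocity), apply Beck--Chevalley for $\pr^{S\times X_1}_S$ after factoring the projection through $S\times X_1$, and use induction for general $n$. The difference is only presentational: the paper runs a chain of equivalences against an arbitrary test element of $\P(S)$, whereas you first derive Frobenius as an equation and then compose the left adjoints $\ex{X_1}{S}\circ\ex{X_2}{S\times X_1}$ directly.
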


\begin{proof}
    For $n=0$, we shall check that $\top_{\P(S)} = \ex{\tmn}{S}\top_{\P(S)}$,
    but this follows from the fact that $\ex{\tmn}{S}$ is the left adjoint of the identity on $\P(S)$, and hence it is the identity.
    
    For $n=1$, the statement is trivially true.
    
   Let $n=2$. We shall obtain the equality
    \begin{equation} \label{eq:binary}
        \ex{X_1}{S} \alpha_1 \land\ex{X_2}{S} \alpha_2 = \ex{X_1\times X_2}{S}\bigl(\P(\pr^{S \times X_1\times X_2}_{S \times X_1})(\alpha_1)\land\P(\pr^{S \times X_1\times X_2}_{S \times X_2})(\alpha_2)\bigr).
    \end{equation}
    By definition of $\exists$, \eqref{eq:binary} holds if and only if for all $\beta\in \P(S)$ we have 
    \[
        \ex{X_1}{S} \alpha_1 \land\ex{X_2}{S} \alpha_2 \leq \beta \iff  \P(\pr^{S \times X_1\times X_2}_{S \times X_1})(\alpha_1)\land\P(\pr^{S \times X_1\times X_2}_{S \times X_2})(\alpha_2)\leq \P(\pr^{S \times X_1\times X_2}_S)(\beta).
    \]
    So, let $\beta \in \P(S)$. We have the following equivalences:
    \begin{align*}
        &\P(\pr^{S \times X_1\times X_2}_{S \times X_1})(\alpha_1)\land\P(\pr^{S \times X_1\times X_2}_{S \times X_2})(\alpha_2)\leq \P(\pr^{S \times X_1\times X_2}_S)(\beta)\\
    &\iff  \P(\pr^{S \times X_1\times X_2}_{S \times X_1})(\alpha_1)\land \P(\pr^{S \times X_1 \times X_2}_{S \times X_2})(\alpha_2)\leq \P(\pr_{S \times  X_1}^{S \times X_1\times X_2})\bigl(\P(\pr^{S \times X_1}_{S})(\beta)\bigr)\\
    &\iff \alpha_1\land \ex{X_2}{S \times X_1}\P(\pr^{S \times X_1 \times X_2}_{S \times X_2})(\alpha_2)\leq \P(\pr^{S \times X_1}_{S})(\beta)&&\text{(by \cref{l:universal-join})}\\
    &\iff \alpha_1\land \P(\pr^{S \times X_1}_{S})\bigl(\ex{X_2}{S}\alpha_2 \bigr)\leq \P(\pr^{S \times X_1}_{S})(\beta)&&\text{(by Beck--Chevalley)}\\
    &\iff \ex{X_1}{S}\alpha_1\land \ex{X_2}{S}\alpha_2\leq \beta &&\text{(by \cref{l:universal-join}).}
    \end{align*}
    
    The statement (for an arbitrary $n$) follows by induction.
\end{proof}

\begin{remark}
Our completeness argument iteratively alternates two operations: completion and
adjoining witnesses for appropriate existential statements.
Iterating this along an $\omega$-chain and taking the colimit yields a Henkin
first-order Boolean doctrine. This is in line with Johnstone's presentation of Gödel's completeness theorem
\cite[Thm.~3.7]{Johnstone1987}. It is also similar to Volger's proof in \cite{Volger1975}.
\end{remark}

\begin{lemma}[Extension to Henkinness: first step]\label{l:ext-Henkin-first-step}
    Let $\P$ be a first-order Boolean doctrine over a small category $\C$ such that $\P(\tmn) = 2$.
    There are a small category $\widetilde{\C}$ with the same objects as $\C$, a first-order Boolean doctrine $\widetilde{\P}$ over $\widetilde{\C}$ such that $\widetilde{\P}(\tmn) = 2$ and a first-order Boolean doctrine morphism $(\widetilde{R},\tilde{\r}) \colon \P \to \widetilde{\P}$ such that $\widetilde{R} \colon \C \to \widetilde{\C}$ is the identity on objects and, for every $X \in \C$ and $\alpha \in \P(X)$ such that $\ex{X}{\tmn} \alpha = \top_{\P(\tmn)}$, there is $c \colon \tmn_{\widetilde{\C}} \to X$ in $\widetilde{\C}$ such that $\widetilde{\P}(c)(\tilde{\r}_X(\alpha)) = \top_{\widetilde{\P}(\tmn)}$.
\end{lemma}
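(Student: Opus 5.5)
Let $I$ be the set of pairs $(X,\alpha)$ with $X\in\C$, $\alpha\in\P(X)$ and $\ex{X}{\tmn}\alpha=\top_{\P(\tmn)}$; this is a set because $\C$ is small. For $i=(X,\alpha)\in I$ put $S_i\coloneqq X$. We apply the construction of \cref{ss:add-infinite-constant} to obtain $(R,\r)\colon\P\to\P'$ over $\C'$; here $\C'$ is small and has the same objects as $\C$, and $R$ is the identity on objects. For each $i=(X,\alpha)$ we have the constant $c_i\coloneqq[\{i\},\id_{X}]_{\tmn,X}\colon\tmn\dashrightarrow X$, where $H_{\{i\}}\times\tmn\cong X$. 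The relevant element of $\P'(\tmn)$ is $\theta_i\coloneqq\P'(c_i)(\r_X(\alpha))$, and by \eqref{eq:def-reindex-p1'} this is $[\{i\},\alpha]_\tmn$.

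Let $F$ be the filter of $\P'(\tmn)$ generated by $\{\theta_i\}_{i\in I}$. The key step is to show that $F$ is proper, i.e.\ that $\bigwedge_{i\in\X}\theta_i\neq\bot$ for every finite $\X\in\mathcal A$. At stage $\X$, this meet is represented in $\P(H_\X)$ by $\bigwedge_{i\in\X}\P(\pr^{H_\X}_{S_i})(\alpha_i)$. Since $\P'(\tmn)$ is a directed colimit, the meet is $\bot$ in $\P'(\tmn)$ iff it becomes $\bot$ at some larger stage $\Z\supseteq\X$; this reduces to the meet being $\bot$ in $\P(H_\Z)$, and pulling back along the projection $H_\Z\to H_\X$ preserves $\bot$ in a nonzero sense thanks to the existential adjunction. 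So it suffices to apply $\ex{H_\X}{\tmn}$. \cref{l:exists-meet} with $S=\tmn$ gives
\[
\ex{H_\X}{\tmn}\bigwedge_{i\in\X}\P(\pr^{H_\X}_{S_i})(\alpha_i)=\bigwedge_{i\in\X}\ex{S_i}{\tmn}\alpha_i=\top_{\P(\tmn)}.
\]
If $\gamma\in\P(H_\Z)$ is the pullback of this meet, then $\ex{H_\Z}{\tmn}\gamma=\top$ as well: write $H_\Z\cong H_\X\times H_{\Z\setminus\X}$ and apply \cref{l:exists-meet} with $\top$ on the extra factor, provided $\ex{H_{\Z\setminus\X}}{\tmn}\top=\top$. \textbf{This is the main obstacle}, because empty sorts are allowed. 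We will bypass it by noting that $\top=\ex{S_j}{\tmn}\alpha_j\le\ex{S_j}{\tmn}\top$ for every $j\in I$, so each sort $S_j$ is inhabited at the level of $\P(\tmn)$, and products of the $S_j$ are handled by \cref{l:exists-meet}. Hence $\gamma\neq\bot$, since $\ex{}{}\bot=\bot$ and $\top\neq\bot$ in $\P(\tmn)=2$. Therefore the meet is nonzero at every stage, and $F$ is proper.

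Next, extend $F$ to an ultrafilter $U$ of $\P'(\tmn)$ using the Boolean prime ideal theorem. Set $\widetilde\P\coloneqq\P'_{\!/U}$ over $\widetilde\C\coloneqq\C'$, as in \cref{r:quotient-doctrine-filter}. Then $\widetilde\P(\tmn)=\P'(\tmn)/U=2$, and we define $(\widetilde R,\tilde\r)\coloneqq(\id,\pi)\circ(R,\r)$, whose functor is the identity on objects. For $(X,\alpha)=i\in I$, naturality of $\pi$ gives $\widetilde\P(c_i)(\tilde\r_X(\alpha))=\pi_\tmn(\theta_i)=\top$, because $\theta_i\in U$. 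This is exactly the required witness.

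Routine checks remain: the identification of $\theta_i$ via \eqref{eq:def-reindex-p1'}, and the fact that equality to $\bot$ in the colimit is detected at a finite stage, since filtered colimits of Boolean algebras are computed in $\Set$.
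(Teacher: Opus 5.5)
Your proposal is correct and follows essentially the same route as the paper. You add a constant of type $X$ for each $(X,\alpha)$ with $\ex{X}{\tmn}\alpha=\top$, show via \cref{l:exists-meet} that the filter generated by the instantiated witnesses is proper, extend it to an ultrafilter, and quotient. The only difference is cosmetic: you pad the extra coordinates $j\in\Z\setminus\X$ with $\top$ and use $\ex{S_j}{\tmn}\top\ge\ex{S_j}{\tmn}\alpha_j=\top$, whereas the paper first characterizes the filter explicitly and then conjoins all the $\alpha_j$ over the larger stage, which handles possibly empty sorts in the same way.
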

Before starting the proof, we give an informal outline of it.
For every formula $\alpha(X)$ in a context $X$ such that $\exists X\,\alpha(X)$ is true, we add to the language a constant $c_\alpha$ substitutable for $X$, and denote by $\P'$ the doctrine obtained from $\P$ with this expansion of the language.
We denote by $G$ the filter of $\P'(\tmn)$ generated by the formulas $\alpha(c_\alpha/X)$ obtained by substituting $c_\alpha$ for $X$ in $\alpha$, for each $\alpha(X)$ as above. We prove that $G$ is a proper filter, and then apply the Boolean Prime Ideal Theorem to extend $G$ to an ultrafilter $U$, and we conclude by taking the quotient doctrine $\P'_{\!/U}$.

\begin{proof}
    Let 
    \[
    (R, \r) \colon \P \longrightarrow \P'
    \]
    be the construction (detailed in \cref{ss:add-infinite-constant}) that simultaneously adds a constant of type $X$ for each pair $(X,\alpha)$ where $X \in \C$ and $\alpha \in \P(X)$ are such that  $\ex{X}{\tmn} \alpha = \top_{\P(\tmn)}$.
    I.e., letting $I\coloneqq\{(X,\alpha)\mid X \in \C,\, \alpha \in \P(X),\, \ex{X}{\tmn} \alpha = \top_{\P(\tmn)}\}$, we instantiate the construction to the family $(X)_{(X, \alpha) \in I}$.
   
    For every $X \in \C$ and $\alpha \in \P(X)$ such that $\ex{X}{\tmn} \alpha = \top_{\P(\tmn)}$, 
    we let $c_\alpha$ denote the morphism
    \[
    [\{(X,\alpha)\},\id_X]_{\tmn,X} \colon \tmn \dashrightarrow X
    \]
    in $\C'$.
    We let $G$ be the filter of $\P'(\tmn)$ generated by the following subset of $\P'(\tmn)$:
    \[
        \{\P'(c_\alpha)(\r_X(\alpha))\mid X \in \C,\, \alpha \in \P(X)\text{ s.t. }\ex{X}{\tmn} \alpha = \top_{\P(\tmn)}\}.
    \]
    Our next goal is to show that the filter $G$ is proper.

    \begin{claim} \label{f:description-filter}
        An element $\beta' \in \P'(\tmn)$ belongs to ${G}$ if and only if there are $\bar{S}\in\mathcal{A}$ and $\beta \in \P(H_{\bar{S}})$ such that $\beta' = [\bar{S}, \beta]_\tmn$ and
        \begin{equation} \label{eq:witness-filter-membership}
            \bigwedge_{(S, \sigma) \in \bar{S}} \P(\pr_{S}^{H_{\bar{S}} })(\sigma)\leq \beta.
        \end{equation}
    \end{claim}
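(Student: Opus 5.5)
The plan is to compute the generators of $G$ explicitly as classes $[\X,\cdot]_\tmn$ and then use that a filter generated by a set consists of the elements above finite meets of generators.

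First, I identify each generator. Fix $(X,\alpha)\in I$ and put $\X_0=\{(X,\alpha)\}$, so that $H_{\X_0}=X$. We have $\r_X(\alpha)=[\varnothing,\alpha]_X$ and $c_\alpha=[\X_0,\id_X]_{\tmn,X}$, where $\id_X$ is viewed as a morphism $H_{\X_0}\times\tmn\cong X\to X$. Apply the reindexing formula \eqref{eq:def-reindex-p1'} with $\Z=\X_0$. The resulting morphism $H_{\X_0}\times\tmn\to H_\varnothing\times X$ is, up to the canonical isomorphisms, $\id_X$. Hence
\[
\P'(c_\alpha)(\r_X(\alpha))=[\{(X,\alpha)\},\alpha]_\tmn .
\]
In this representation $\alpha$ is read in $\P(H_{\{(X,\alpha)\}})=\P(X)$.

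Second, I compute finite meets. For a finite $\bar S\in\mathcal A$, the meet in the directed colimit $\P'(\tmn)$ is obtained by lifting every term to a common stage $\bar S$. Lifting uses reindexing along the projections $H_{\bar S}\to H_{\{(S,\sigma)\}}=S$, which gives
\[
\bigwedge_{(S,\sigma)\in\bar S}[\{(S,\sigma)\},\sigma]_\tmn=\Bigl[\bar S,\ \bigwedge_{(S,\sigma)\in\bar S}\P(\pr^{H_{\bar S}}_{S})(\sigma)\Bigr]_\tmn .
\]
The empty meet is $[\varnothing,\top]_\tmn$, which matches the formula. Moreover, for classes at a common stage $\bar S$, we have $[\bar S,\gamma]_\tmn\le[\bar S,\beta]_\tmn$ exactly when $\P(\pr)(\gamma)\le\P(\pr)(\beta)$ at some larger stage $\Z\supseteq\bar S$. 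Here the projection $\pr^{H_\Z}_{H_{\bar S}}$ is the one induced by the inclusion.

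Third, I prove the two directions.

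($\Leftarrow$) Suppose $\beta'=[\bar S,\beta]_\tmn$ and \eqref{eq:witness-filter-membership} holds. Then $\beta'$ lies above a finite meet of generators, so $\beta'\in G$.

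($\Rightarrow$) Suppose $\beta'\in G$. Then $\beta'$ lies above a finite meet of generators indexed by some finite $\bar S_0$. Write $\beta'=[\Y,\delta]_\tmn$. By the order description in the colimit, there is a stage $\bar S\supseteq\bar S_0\cup\Y$ at which the lifted inequality holds in $\P(H_{\bar S})$. Set $\beta:=\P(\pr^{H_{\bar S}}_{H_\Y})(\delta)$; then $\beta'=[\bar S,\beta]_\tmn$. Now enlarge the meet on the left from $\bar S_0$ to all of $\bar S$. This only makes the left-hand side smaller, since we add conjuncts $\P(\pr)(\sigma)$. So \eqref{eq:witness-filter-membership} holds at stage $\bar S$.

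The main obstacle I expect is the bookkeeping in the first step. One must check that \eqref{eq:def-reindex-p1'}, together with the identifications $H_{\{(X,\alpha)\}}\times\tmn\cong X$ and $H_\varnothing\times X\cong X$, really yields $\alpha$ itself and not some twisted reindexing of it. The remaining steps are routine manipulations in directed colimits of Boolean algebras, which are computed in $\Set$.
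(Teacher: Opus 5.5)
Your proposal is correct and follows the paper's route. You describe $G$ as the set of elements above finite meets of the generators $\P'(c_\alpha)(\r_X(\alpha))$, unfold such an inequality into an inequality in $\P(H_{\bar S})$ at a common stage $\bar S$, and then pass between the meet over the chosen generators and the meet over all of $\bar S$. The only difference is that you spell out the bookkeeping the paper compresses into ``unfolding the definitions'', namely the identity $\P'(c_\alpha)(\r_X(\alpha))=[\{(X,\alpha)\},\alpha]_\tmn$ and the description of meets and order in the directed colimit.
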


    \begin{claimproof}
        The filter $G$ is the set of $\beta' \in \P'(\tmn)$ for which there are $n\in\N$, $X_1, \dots, X_n \in \C$ and $\alpha_1 \in \P(X_1), \dots, \alpha_n \in \P(X_n)$ such that for $i=1,\dots,n$ we have $\ex{X_i}{\tmn} \alpha_i = \top_{\P(\tmn)}$ and such that, in $\P'(\tmn)$,
        \[
        \bigwedge_{i=1}^n\P'(c_{\alpha_i})(\r_{X_i}(\alpha_i))\leq\beta',
        \]
        which means, unfolding the definitions, that there is an upper bound $\bar{S} \in \mathcal{A}$ of $\{(X_i,\alpha_i) \mid i \in \{1, \dots, n\}\}$ and $\beta \in \P(H_{\bar{S}})$ such that $\beta' = [\bar{S}, \beta]_\tmn$ and, in $\P(H_{\bar{S}})$
        \begin{equation} \label{eq:unfolded}
            \bigwedge_{i=1}^n \P(\pr^{H_{\bar{S}}}_{X_i})(\alpha_i)\leq \beta.
        \end{equation}
        
        If $\beta'$ possesses this property (i.e.\ $\beta'\in G$), then the formula \eqref{eq:witness-filter-membership} trivially holds since 
        \[
        \bigwedge_{(S, \sigma) \in \bar{S}} \P(\pr_{S}^{H_{\bar{S}} })(\sigma)\leq \bigwedge_{i=1}^n \P(\pr^{H_{\bar{S}}}_{X_i})(\alpha_i) \leq \beta.
        \]
    
        Conversely, let $\bar{S}\in\mathcal{A}$ and $\beta\in \P(H_{\bar S})$ be such that \eqref{eq:witness-filter-membership} holds. 
        Then $[\bar S,\beta]_\tmn\in G$, because \eqref{eq:unfolded} is obtained immediately by taking an enumeration $(X_1, \alpha_1), \dots, (X_n, \alpha_n)$ of the elements of $\bar{S}$.
    \end{claimproof}

    To prove that the filter $G$ is proper, we suppose $\bot_{\P'(\tmn)}\in G$ and we seek a contradiction.
    By \cref{f:description-filter}, there are $\bar{S}\in\mathcal{A}$ and $\beta \in \P(H_{\bar{S}})$ such that $\bot_{\P'(\tmn)} = [\bar{S}, \beta]_\tmn$ and
        \begin{equation} \label{eq:witness-filter-membership1}
            \bigwedge_{(S, \sigma) \in \bar{S}} \P(\pr_{S}^{H_{\bar{S}} })(\sigma) \leq \beta.
        \end{equation}
    Since $\bot_{\P'(\tmn)} = [\bar{S}, \beta]_\tmn$, there is an upper bound $\bar V\in\mathcal{A}$ of $\bar S$ such that
    \begin{equation} \label{eq:equals-bot}
        \P(\pr_{H_{\bar S}}^{H_{\bar V}})(\beta)=\bot_{\P(H_{\bar V})}.
    \end{equation}
    In $\P(H_{\bar V})$ we have:
    \begin{align*}
        \bigwedge_{(V, \nu) \in \bar{V}} \P(\pr_{V}^{H_{\bar{V}} })(\nu)
        &\leq\bigwedge_{(S, \sigma) \in \bar{S}} \P(\pr_{S}^{H_{\bar{V}} })(\sigma)&&\text{(since $\bar S\subseteq\bar V$)}\\
        &\leq  \P(\pr_{H_{\bar S}}^{H_{\bar V}})(\beta) &&\text{(applying $\P(\pr_{H_{\bar S}}^{H_{\bar V}})$ to both sides of \eqref{eq:witness-filter-membership1})}\\
        & = \bot_{\P(H_{\bar{V}})} && \text{(by \eqref{eq:equals-bot})}\\
        &= \P(!_{H_{\bar{V}}})(\bot_{\P(\tmn)}).
    \end{align*}
    By the adjunction $\ex{H_{\bar{V}}}{\tmn}\dashv \P(!_{H_{\bar{V}}})$, we get in $\P(\tmn)$
    \[
        \ex{H_{\bar{V}}}{\tmn}\bigwedge_{(V, \nu) \in \bar{V}}\P(\pr_{V}^{H_{\bar{V}} })(\nu) \leq \bot_{\P(\tmn)}.
    \]

    By \cref{l:exists-meet},
    \[
        \bigwedge_{(V, \nu) \in \bar{V}} \ex{V}{\tmn}\nu \leq \bot_{\P(\tmn)}.
    \]
    Since $\bar{V}\in\mathcal{A}$, for every $(V,\nu)\in\bar V$ we have $\ex{V}{\tmn}\nu=\top_{\P(\tmn)}$, so we get $\top_{\P(\tmn)}\leq\bot_{\P(\tmn)}$, a contradiction (since $\P(\tmn)=2$). Hence, $G$ is a proper filter.
    
    Since $G$ is a proper filter, by the Boolean Prime Ideal Theorem there is an ultrafilter $U$ extending $G$.
    Then, consider the quotient $\P'_{\!/U}$ of $\P'$ by $U$ as in \cref{r:quotient-doctrine-filter}, and let $(\id_{\C'},\pi)\colon\P'\to\P'_{\!/U}$ denote the quotient morphism.
    We define the first-order Boolean doctrine morphism $(\widetilde{R},\tilde{\r}) \colon \P  \to \P'_{\!/U}$ as the composite of the following first-order Boolean doctrine morphisms:
    \[
    \P \xrightarrow{(R,\r)} \P' \xrightarrow{(\id_{\C'},\pi)} \P'_{\!/U};
    \]
    in other words, $(\widetilde{R},\tilde{\r}) \coloneqq (R, \pi \circ \r)$.
    
    We check that the desired properties in the statement are satisfied (upon setting $\widetilde{\P} \coloneqq \P'_{\!/U}$ and $\widetilde{\C} \coloneqq \C'$). 
    First, since $U$ is an ultrafilter, $\P'_{\!/U}(\tmn)=2$. Secondly, observe that $\widetilde{R}$ is the identity on objects since $R$ is.
    Let $\alpha\in\P(X)$ be such that $\ex{X}{\tmn} \alpha = \top_{\P(\tmn)}$.
    We shall find $c \colon \tmn_{\C'} \to X$ in $\C'$ such that $\P'_{\!/U}(c)(\tilde{\r}_X(\alpha)) = \top_{\P'_{\!/U}(\tmn)}$; it is enough to take $c\coloneqq c_\alpha$, because the element $ {\P}'(c_\alpha)({\r}_X(\alpha))$ of $\P'(\tmn)$ belongs to $G$ and so also to $U$,
    which implies that the element $[{\P'}(c_\alpha)({\r}_X(\alpha))]_\tmn = \P'_{\!/U}(c_\alpha)(\tilde{\r}_X(\alpha)) $ is the top element of $\P'_{\!/U}(\tmn)$.
\end{proof}

In the following theorem, we show how to produce a Henkin theory from a maximally consistent one.
We accomplish this by using \cref{l:ext-Henkin-first-step} $\omega$ many times.
The desired Henkin theory is obtained as the colimit.
Note that this Henkin theory is not canonically determined by the original one, because in each step we use the axiom of choice (via the Boolean Prime Ideal Theorem).

\begin{theorem} [Extension to Henkinness] \label{t:extension-to-Henkin}
    Let $\P$ be a first-order Boolean doctrine over a small category $\C$ such that $\P(\tmn) = 2$.
    There are a category ${\C'}$ with the same objects as $\C$, a Henkin first-order Boolean doctrine ${\P'}$ over ${\C'}$ and a first-order Boolean doctrine morphism
    $({R},{\r}) \colon \P \to {\P'}$ such that ${R} \colon \C \to {\C'}$ is the identity on objects.
\end{theorem}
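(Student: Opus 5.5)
We plan to iterate \cref{l:ext-Henkin-first-step} $\omega$ times and take the colimit. Set $\P_0 \coloneqq \P$ over $\C_0 \coloneqq \C$. Given $\P_n$ over a small $\C_n$ with the same objects as $\C$ and $\P_n(\tmn)=2$, apply \cref{l:ext-Henkin-first-step} to get $\P_{n+1}$ over $\C_{n+1}$ and a morphism $(R_n,\r_n)\colon \P_n\to\P_{n+1}$. Here $R_n$ is the identity on objects, $\P_{n+1}(\tmn)=2$, and every $\alpha\in\P_n(X)$ with $\ex{X}{\tmn}\alpha=\top$ acquires a witness $c\colon\tmn\to X$ in $\C_{n+1}$ with $\P_{n+1}(c)((\r_n)_X(\alpha))=\top$. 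The category $\C_{n+1}$ is small: the construction of \cref{ss:add-infinite-constant} over a small $\C_n$ only uses a set of indices, and the quotient does not change the base.

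Next we form the colimit $\P'$ over $\C'$ of this $\omega$-chain in $\DoctABA$, computed as in \cref{ss:add-infinite-constant}. Objects are those of $\C$, since all $R_n$ are the identity on objects. A morphism $A\to B$ in $\C'$ is an equivalence class $[n,f]$ with $f\colon A\to B$ in $\C_n$, two such being identified when they agree after pushing to some later stage. The fibers are $\P'(A)=\operatorname{colim}_n \P_n(A)$, computed in $\Set$ since the colimit is directed. Reindexing and $\exists$ are computed at a common stage. We must check the following: finite products in $\C'$ are inherited, because each $R_n$ preserves finite products and is the identity on objects, so the stage-wise product cones remain product cones in the colimit; $\exists$ is well defined and left adjoint to reindexing along projections; Beck--Chevalley holds; and the colimit maps $(R,\r)\colon\P\to\P'$ are first-order Boolean doctrine morphisms with $R$ the identity on objects. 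Each verification reduces to a finite stage, because every finite amount of data lives at some $\P_n$ and every equality or inequality holding in the colimit already holds at some finite stage. The preservation of $\exists$ by each $(R_n,\r_n)$ is what makes $\exists$ compatible with the transition maps.

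Then we verify Henkinness. For \eqref{i:henkin-1}: $\P'(\tmn)$ is a directed colimit of copies of $2$ along Boolean homomorphisms, hence it is $2$; the transition maps are injective since $2$ has no proper quotients other than the trivial algebra. For \eqref{i:henkin-2}: take $X$ and $\alpha'\in\P'(X)$ with $\ex{X}{\tmn}\alpha'=\top$. Represent $\alpha'$ by some $\alpha\in\P_n(X)$. Since the colimit map $\P_n(\tmn)=2\to\P'(\tmn)=2$ is the identity and preserves $\exists$, we get $\ex{X}{\tmn}\alpha=\top$ already in $\P_n$. The witness $c$ in $\C_{n+1}$ then gives $[n+1,c]\colon\tmn\to X$ in $\C'$ with $\P'([n+1,c])(\alpha')=\top$.

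The main obstacle is the bookkeeping in showing that the directed colimit really is a first-order Boolean doctrine (products and Beck--Chevalley) and that the colimit injections preserve $\exists$. I would handle this with a general lemma that directed colimits in $\DoctABA$ of diagrams whose base functors are identity-on-objects are computed stage-wise, which is the same reasoning already used for the diagram $D$ in \cref{ss:add-infinite-constant} (citing \cite[Section~1.3]{Guffanti2023}).
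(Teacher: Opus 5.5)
Your proposal is correct and follows essentially the same route as the paper. You iterate \cref{l:ext-Henkin-first-step} along $\omega$ (the paper notes explicitly that this uses dependent choice) and take the directed colimit in $\DoctABA$, computed fiberwise as in \cite[Section~1.3]{Guffanti2023}. You then get $\P'(\tmn)=2$ as the colimit of $2\to2\to\cdots$. Finally, you obtain witnesses by representing $\alpha'$ at some stage $n$, using that the colimit injection $\P_n(\tmn)=2\to\P'(\tmn)=2$ is the identity and preserves $\exists$, and pushing the stage-$(n+1)$ witness into the colimit via naturality of the colimit injection.
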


\begin{proof}
    We define a sequence $\P^n \colon (\C^{n})\op \to \BA$ of first-order Boolean doctrines such that $\C^n$ is small and has the same objects as $\C$, together with a sequence $((R^n, \r^n) \colon \P^n \to \P^{n +1})_{n \in \N}$ of first-order Boolean doctrine morphisms where each $R^n\colon \C^n \to \C^{n + 1}$ is the identity on objects, each $\P^n(\tmn)$ is the two-element Boolean algebra $2$, and with the following properties.
    \begin{enumerate}
        \item \label{i:extension-base} 
        $\C^0 = \C$ and $\P^0=\P$;
        
        \item \label{i:extension-Henkinnessinductive}
        for every $X \in \C$ and $\alpha \in \P^n(X)$ with $\ex{X}{\tmn} \alpha = \top_{\P^n(\tmn)}$, there is $c \colon \tmn_{\C^{n + 1}} \to X$ in $\C^{n + 1}$ such that $\P^{n + 1}(c)(\r^n_X(\alpha)) = \top_{\P^{n + 1}(\tmn)}$.
    \end{enumerate}
    
    We define these sequences inductively (with the aid of the axiom of dependent choice).
    For the base case, set $\C^0 \coloneqq \C$ and $\P^0 \coloneqq \P$.
    For the inductive case, for any $n \in \N$, given $\C^n$, $\P^n$, apply \cref{l:ext-Henkin-first-step} to find $\C^{n+1}$, $\P^{n+1}$, and $(R^{n}, \r^n)$.
    This gives us the sequences with the desired properties.
    The sequence allows us to define a directed diagram of first-order Boolean doctrines indexed by the poset $(\N, \leq)$ of natural numbers:
    \begin{equation}\label{eq:diag-n-henkin}
        \P^0
        \xrightarrow{(R^0, \r^0)}
        \P^1
        \xrightarrow {(R^1, \r^1)}
        \P^2
        \longrightarrow
        \dots
        \longrightarrow
        \P^n
        \xrightarrow {(R^n, \r^n)}
        \P^{n+1}
        \longrightarrow
        \dots
    \end{equation}
    For all $n\leq m\in\N$, we call $(R^{n;m},\r^{n;m})$ the composite
    \[
    (R^{m-1},\r^{m-1}) \circ \dots \circ (R^{n+1},\r^{n+1})\circ (R^n,\r^n) \colon \P^n \longrightarrow \P^m.
    \]
    
    We let ${\P'}\colon{\C'}\op\to\BA$ denote the colimit of the diagram \eqref{eq:diag-n-henkin} of first-order Boolean doctrines, which is computed fiberwise (see \cite[Section~1.3]{Guffanti2023}).
    For each $n \in \N$, we let $(E^n,\mathfrak{e}^n) \colon \P^n\to{\P'}$ denote the colimit morphism.
    \[
    \begin{tikzcd}
        {\P^0(X)} & {\P^1(X)} & {\P^2(X)} & \dots & {\P^n(X)} & \dots \\
        \\
        && {{\P'}(X)}
        \arrow["{\r^0_X}", from=1-1, to=1-2]
        \arrow["{\r^2_X}", from=1-3, to=1-4]
        \arrow["{\r^{n-1}_X}", from=1-4, to=1-5]
        \arrow["{\r^n_X}", from=1-5, to=1-6]
        \arrow["{\mathfrak{e}^0_X}"'{description}, from=1-1, to=3-3]
        \arrow["{\r^1_X}", from=1-2, to=1-3]
        \arrow["{\mathfrak{e}^1_X}"'{description}, from=1-2, to=3-3]
        \arrow["{\mathfrak{e}^2_X}"'{description}, from=1-3, to=3-3]
        \arrow["{\mathfrak{e}^n_X}"'{description}, from=1-5, to=3-3]
        \arrow["{\r^{0;2}_X}", curve={height=-18pt}, from=1-1, to=1-3]
    \end{tikzcd}
    \]
    We can choose the colimit in such a way that ${\C'}$ has the same objects as $\C$, and for each $n$ the functor $E^n \colon \C^n \to {\C'}$ is the identity on objects.
    Moreover, we observe that, for all $X \in \C$ and $\beta \in {\P'}(X)$, there are $n \in \N$ and $\alpha \in \P^n(X)$ such that $\beta = \mathfrak{e}^n_X(\alpha)$.

    We prove that ${ \P'}$ is Henkin.
    First of all, we shall prove that ${\P'}(\tmn) = 2$.
    This is true because ${\P'}(\tmn)$ is the directed colimit in the category of Boolean algebras of 
    \[
    2 \longrightarrow 2 \longrightarrow 2 \longrightarrow 2 \longrightarrow \dots
    \]
    In the following, let us denote by ${\exists'}$ and $\exists^n$ the existential quantifiers in the first-order Boolean doctrines ${\P'}$ and $\P^n$ (for every $n\in\N$) respectively.
    Now, let ${\alpha'}\in{\P'}(X)$ be such that $({\exists'}X)_\tmn\, {\alpha'} = \top_{{\P'}(\tmn)}$, and let us prove that there is a morphism ${c'} \colon \tmn \to X$ in ${\C'}$ such that ${\P'}({c'})({\alpha'}) = \top_{{\P'}(\tmn)}$.
    Let $n\in\N$ and $\alpha\in\P^n(X)$ be such that ${\alpha'}=\mathfrak{e}^n_{X}(\alpha)$.
    We have
    \begin{equation} \label{eq:top-is-image-of-exists}
        \top_{{\P'}(\tmn)} = ({\exists'}X)_\tmn\, {\alpha'} = ({\exists'}X)_\tmn\, \mathfrak{e}_{X}^n(\alpha) = \mathfrak{e}_\tmn^n\bigl((\exists^n X)_\tmn\, \alpha\bigr).
    \end{equation}
    Since $\mathfrak{e}_\tmn^n$ is a Boolean homomorphism from $\P^n(\tmn) = 2$ to ${\P'}(\tmn) = 2$, it is the identity, and so from \eqref{eq:top-is-image-of-exists} we deduce $(\exists^n X)_\tmn\, \alpha = \top_{\P^n(\tmn)}$.
    Then, by \eqref{i:extension-Henkinnessinductive}, there is $c \colon \tmn_{\C^{n + 1}} \to X$ in $\C^{n + 1}$ such that 
    \begin{equation} \label{eq:reindexing-is-top}
        \P^{n + 1}(c)(\r^n_X(\alpha)) = \top_{\P^{n + 1}(\tmn)}.
    \end{equation}
    Therefore,
    \begin{align*}
        & \bigl({\P'}(E^{n + 1}(c))\bigr)({\alpha'}) \\
        & = \bigl({\P'}(E^{n + 1}(c))\bigr)\bigl(\mathfrak{e}_{X}^{n}(\alpha)\bigr)\\
        & = \bigl({\P'}(E^{n + 1}(c))\bigr)\Bigl(\mathfrak{e}_{X}^{n + 1}\bigl(\r^n_X(\alpha)\bigr)\Bigr)\\
        & = \mathfrak{e}^{n + 1}_\tmn\Bigl(\P^{n + 1}(c)\bigl(\r^n_X(\alpha)\bigr)\Bigr) && \text{(by naturality of $\mathfrak{e}^{n+1} \colon \P^{n + 1} \to \P'\circ (E^{n + 1})\op$ at $c \colon \tmn \to X$)} \\
        & = \mathfrak{e}^{n + 1}_\tmn(\top_{\P^{n + 1}(\tmn)}) && \text{(by \eqref{eq:reindexing-is-top})}\\
        & = \top_{{\P'}(\tmn)}.
    \end{align*}
    Taking $c' \coloneqq E^{n + 1}(c)$ gives the desired result.
    This shows that $\P'$ is Henkin.
    
    Finally, for the required first-order Boolean doctrine morphism $({R},{\r}) \colon \P \to{ \P'}$ in the statement we can take $(E^0,\mathfrak{e}^0)$. 
\end{proof}

\subsection{G\"odel's completeness theorem}

\begin{notation}[The satisfaction relation $\vDash$]\label{n:semantic-conseq}
    Let $(M, \m)$ be a model of a first-order Boolean doctrine $\P$.
    Since $M$ preserves finite products, $M(\tmn)$ is a singleton $\{*\}$.
    Let $\alpha \in \P(\tmn)$.
    Note that $\m_\tmn(\alpha)$ (which is a subset of $\{*\}$) is either $\{*\}$ or $\varnothing$.
    We write $(M, \m) \vDash \alpha$ if $\m_\tmn(\alpha)$ is the singleton $\{*\}$ (``the interpretation of $\alpha$ in $(M, \m)$ is true'').
\end{notation}

\begin{theorem}[Order separation via models]
\label{t:Godel}
    Let $\P$ be a first-order Boolean doctrine over a small category.
    If $\alpha \nleq \beta$ in $\P(\tmn)$, then $\P$ has a model $(M, \m)$ such that $(M, \m) \vDash \alpha$ and $(M, \m) \nvDash \beta$.
\end{theorem}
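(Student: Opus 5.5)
The plan is to reduce to the Henkin case via an ultrafilter quotient, following the standard Henkin route. Since $\alpha \nleq \beta$ in the Boolean algebra $\P(\tmn)$, the element $\alpha \land \lnot\beta$ is not $\bot_{\P(\tmn)}$. Hence the filter generated by $\alpha\land\lnot\beta$ is proper, and by the Boolean Prime Ideal Theorem it extends to an ultrafilter $U$ of $\P(\tmn)$ containing both $\alpha$ and $\lnot\beta$.

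Next we form the quotient doctrine $\P_{\!/U}$ of \cref{r:quotient-doctrine-filter}, together with the quotient morphism $(\id_\C,\pi)\colon\P\to\P_{\!/U}$. Its fiber $\P_{\!/U}(\tmn)$ is $\P(\tmn)/U\cong 2$, and $\pi_\tmn$ sends every element of $U$ to $\top$. So $\pi_\tmn(\alpha)=\top$ and $\pi_\tmn(\beta)=\bot$, since $\lnot\beta\in U$. The base category is still $\C$, which is small. We then apply \cref{t:extension-to-Henkin} to $\P_{\!/U}$ to obtain a Henkin doctrine $\P'$ over $\C'$ and a morphism $(R,\r)\colon\P_{\!/U}\to\P'$.

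We then apply \cref{l:Henkin} to get the term model $(M',\m')\colon\P'\to\pws$. The required model of $\P$ is the composite
\[
(M,\m)\coloneqq(M',\m')\circ(R,\r)\circ(\id_\C,\pi)\colon \P\longrightarrow\pws.
\]
This is a model because composites of first-order Boolean doctrine morphisms are again such morphisms.

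It remains to check satisfaction. The component $\r_\tmn$ is a Boolean homomorphism $2\to 2$, hence the identity, so $\r_\tmn\pi_\tmn(\alpha)=\top$ and $\r_\tmn\pi_\tmn(\beta)=\bot$. The component $\m'_\tmn$ is a Boolean homomorphism into $\pws(\Hom(\tmn,\tmn))=\pws(\{*\})$, so it sends $\top$ to $\{*\}$ and $\bot$ to $\varnothing$. Therefore $(M,\m)\vDash\alpha$ and $(M,\m)\nvDash\beta$.

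The only delicate points are bookkeeping. First, the identifications of terminal objects: the functors are the identity on objects and preserve products, so $\tmn$ is sent to $\tmn$. Second, smallness of $\C$ must be available when invoking \cref{t:extension-to-Henkin}; it is, because the quotient does not change the base. No step is a real obstacle, since the substantive work is already contained in \cref{t:extension-to-Henkin} and \cref{l:Henkin}.
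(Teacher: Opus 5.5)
Your proposal is correct and follows essentially the same route as the paper's proof: you take an ultrafilter $U$ containing $\alpha$ but not $\beta$ via the Boolean Prime Ideal Theorem, pass to the quotient $\P_{\!/U}$, extend to a Henkin doctrine by \cref{t:extension-to-Henkin}, and compose the term model of \cref{l:Henkin} back to $\P$. Your use of $\alpha\land\lnot\beta$ to produce $U$, and your remark that the components over $\tmn$ are Boolean homomorphisms $2\to 2$ (hence preserve $\top$ and $\bot$), only make explicit details the paper leaves implicit.
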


\begin{proof}
    By the Boolean Prime Ideal Theorem, since $\alpha \nleq \beta$ in $\P(\tmn)$, there is an ultrafilter $U$ of $\P(\tmn)$ containing $\alpha$ and not $\beta$.
    Let $\C$ be the base category of $\P$, and let $(\id_{\C},\pi)\colon\P\to\P_{\!/U}$ be the quotient of $\P$ by the ultrafilter $U$ defined as in \cref{r:quotient-doctrine-filter}. 
    
    Since $U$ is an ultrafilter, ${\P_{\!/U}}(\tmn) = 2$. Moreover, $\pi_\tmn(\alpha)=\top$ and $\pi_\tmn(\beta)=\bot$.
    By \cref{t:extension-to-Henkin}, there are a category ${\C'}$ with the same objects as the base category $\C$ of ${\P_{\!/U}}$, a Henkin first-order Boolean doctrine ${\P'}$ over ${\C'}$ and a first-order Boolean doctrine morphism
    $({R},{\r}) \colon \P_{\!/U} \to {\P'}$ such that ${R} \colon \C \to {\C'}$ is the identity on objects.
    By \cref{l:Henkin}, ${\P'}$ has a model $(N,\n)$. Then, the composite $(M,\m)=(N,\n)\circ(R,{\r})\circ(\id_\C,\pi)\colon\P\to\pws$ is a model of $\P$. Moreover, 
    \begin{align*}
    \m_\tmn (\alpha)&=\n_\tmn(\mathfrak{r}_\tmn(\pi_\tmn(\alpha)))=\n_\tmn(\mathfrak{r}_\tmn(\top))=\{*\}&&\text{i.e.\ $(M, \m) \vDash \alpha$, and}\\
    \m_\tmn (\beta)&=\n_\tmn({\mathfrak{r}}_\tmn(\pi_\tmn(\beta)))=\n_\tmn(\mathfrak{r}_\tmn(\bot))=\varnothing && \text{i.e.\ $(M, \m) \nvDash \beta$.} \qedhere
    \end{align*}
\end{proof}

Recalling that we call a first-order Boolean doctrine $\P$ \emph{consistent} if $\top_{\P(\tmn)} \neq \bot_{\P(\tmn)}$, we finally obtain:

\begin{corollary}[G\"odel's completeness theorem for first-order Boolean doctrines] \label{c:Godel}
	Every consistent first-order Boolean doctrine over a small category has a model.
\end{corollary}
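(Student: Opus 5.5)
The corollary follows directly from \cref{t:Godel}, applied with $\alpha \coloneqq \top_{\P(\tmn)}$ and $\beta \coloneqq \bot_{\P(\tmn)}$. Let $\P$ be a consistent first-order Boolean doctrine over a small category, so that $\top_{\P(\tmn)} \neq \bot_{\P(\tmn)}$. The first step is to check that $\top_{\P(\tmn)} \nleq \bot_{\P(\tmn)}$. Indeed, if $\top \leq \bot$ held, then $\bot \leq \top \leq \bot$, and antisymmetry would force $\top = \bot$, contradicting consistency.

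Given this, \cref{t:Godel} yields a model $(M,\m)$ of $\P$ with $(M,\m) \vDash \top_{\P(\tmn)}$ and $(M,\m) \nvDash \bot_{\P(\tmn)}$. In particular $\P$ has a model, which is exactly the claim. The two satisfaction conditions are not needed for existence, although they confirm that the model is nontrivially exhibited.

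I expect no real obstacle here. All the work (the ultrafilter quotient, the $\omega$-iterated Henkin extension, and the term model of \cref{l:Henkin}) is already packaged inside \cref{t:Godel}. The only hypotheses to match are smallness of the base category, which is assumed in the corollary, and the strict non-inequality, which is handled by the one-line argument above.
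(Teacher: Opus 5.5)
Your proof is correct and is essentially the paper's argument: the paper states the corollary immediately after \cref{t:Godel} without a separate proof, and it follows by taking $\alpha = \top_{\P(\tmn)}$ and $\beta = \bot_{\P(\tmn)}$. Consistency gives $\top_{\P(\tmn)} \nleq \bot_{\P(\tmn)}$ exactly as you argue.
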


\section{Generalization to open formulas and pointed models}

\begin{definition}[Model at an object of a first-order Boolean doctrine] \label{d:bool-mod-at-S}
    Let $\P \colon \C\op \to  \BA$ be a first-order Boolean doctrine and let $S \in \C$.
    An \emph{$S$-pointed model of $\P$} is a triple $(M,\m,s)$ where $(M,\m)\colon \P \to \pws$ is a model of $\P$ (i.e.\ a first-order Boolean doctrine morphism from $\P$ to the subset doctrine $\pws$) and $s \in M(S)$.
\end{definition}

Roughly speaking, an $S$-pointed model of $\P$ consists of a model of $\P$ and a value assignment of $S$ in the model.
Up to a one-to-one correspondence (\cref{t:univ-prop}), this is the same thing as a model of the first-order Boolean doctrine $\P_S$ obtained from $\P$ by adding a constant of type $S$. 
\cref{l:model-model-at-S} below asserts the equivalence between the validity of a sentence in $\P_S$ in a model of $\P_S$ and the validity of the associated formula in $\P$ in the associated $S$-pointed model of $\P$.
We make use of the following notation.

\begin{notation}[The satisfaction relation $\vDash$]\label{n:semantic-conseq-at-S}
    Let $\P \colon \C\op \to \BA$ be a first-order Boolean doctrine, let $S \in \C$, let $(M, \m, s)$ be an $S$-pointed model of $\P$, and let $\alpha \in \P(S)$.
    We write $(M, \m, s) \vDash \alpha$ if $s \in \m_S(\alpha)$ (``the interpretation of $\alpha$ in $(M, \m,s)$ is true'').
\end{notation}

\begin{lemma}\label{l:model-model-at-S}
    Let $\P_S$ be the first-order Boolean doctrine obtained from a first-order Boolean doctrine $\P\colon \C\op \to \BA$ by adding a constant of type $S \in \C$, and let $(L_S,\mathfrak{l}_S)\colon \P \to \P_S$ be the canonical first-order Boolean doctrine morphism. Let $(M,\m,s)$ be an $S$-pointed model of $\P$ and $(N,\n)\colon \P_S\to\pws$ the unique model of $\P_S$ such that $(M,\m)=(N,\n)\circ(L_S,\mathfrak{l}_S) $ and $N(\id_S\colon \tmn \rightsquigarrow S)\colon \{*\} \to M(S)$ maps $*$ to $s \in M(S)$. For every $\alpha\in \P_S(\tmn)=\P(S)$,
    \[
    (N,\n)\vDash \alpha \iff (M,\m,s)\vDash\alpha.
    \]
\end{lemma}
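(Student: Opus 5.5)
The plan is to compute $\n_\tmn(\alpha)$ by factoring the element $\alpha\in\P_S(\tmn)=\P(S)$ as the reindexing, along the constant $\id_S\colon\tmn\rightsquigarrow S$, of an element in the image of $\mathfrak{l}_S$. Then naturality of $\n$ and the factorization $(M,\m)=(N,\n)\circ(L_S,\mathfrak{l}_S)$ will turn the statement into one about $\m_S(\alpha)$ and the point $s$.

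First, I compute $\P_S(\id_S)\bigl((\mathfrak{l}_S)_S(\alpha)\bigr)$, where $\id_S\colon \tmn\rightsquigarrow S$ is viewed as a morphism $S\times\tmn\to S$, i.e.\ the identity $S\to S$. By definition,
\[
\P_S(\id_S)\bigl((\mathfrak{l}_S)_S(\alpha)\bigr)=\P(\ple{\pr^{S}_S,\id_S})\bigl(\P(\pr^{S\times S}_{S})(\alpha)\bigr),
\]
where $\pr^{S\times S}_S$ is the projection onto the second factor (the context variable, not the constant). The composite $\pr^{S\times S}_S\circ\ple{\id_S,\id_S}$ is $\id_S$, so by functoriality this element equals $\alpha$. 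The only care needed here is bookkeeping: we must keep straight which copy of $S$ is the added constant and which is the context, given the identification $S\times\tmn\cong S$.

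Next, I apply $\n_\tmn$ and use naturality of $\n\colon\P_S\to\pws\circ N\op$ at the morphism $\id_S\colon\tmn\rightsquigarrow S$:
\[
\n_\tmn(\alpha)=\n_\tmn\bigl(\P_S(\id_S)((\mathfrak{l}_S)_S(\alpha))\bigr)=N(\id_S)^{-1}\bigl[\n_S((\mathfrak{l}_S)_S(\alpha))\bigr].
\]
Since $(N,\n)\circ(L_S,\mathfrak{l}_S)=(M,\m)$ and $L_S(S)=S$, we have $\n_S\circ(\mathfrak{l}_S)_S=\m_S$. Hence $\n_\tmn(\alpha)=N(\id_S)^{-1}[\m_S(\alpha)]$, which is a subset of $\{*\}$. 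Because $N(\id_S)$ sends $*$ to $s$, this set is $\{*\}$ exactly when $s\in\m_S(\alpha)$. That is precisely the claimed equivalence $(N,\n)\vDash\alpha\iff(M,\m,s)\vDash\alpha$.

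The main (mild) obstacle is the identification step. We have to check that $M(S)=N(S)$ is literally the same set, which holds because $L_S$ is the identity on objects. We also have to make sure that the canonical product isomorphisms implicit in Definition~\ref{d:ABA-morphism}, and the identification of $\tmn\times S$ with $S$, do not introduce a nontrivial twist. I would handle this by fixing the convention that $S$ is chosen as the product $S\times\tmn$, exactly as in the remark preceding Theorem~\ref{t:univ-prop}.
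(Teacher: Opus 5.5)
Your proof is correct. It differs from the paper mainly in being self-contained: the paper gives no argument of its own and records the lemma as a particular case of the more general \cite[Lem.~2.22]{AbbadiniGuffanti2}.

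You verify the needed case directly from the definitions recalled in \cref{ss:const}, in three steps.
\begin{enumerate}
\item The identity $\alpha=\P_S(\id_S)\bigl((\mathfrak{l}_S)_S(\alpha)\bigr)$ holds because $\ple{\pr^{S\times\tmn}_S,\id_S}$ is the diagonal of $S$. Its composite with either projection is $\id_S$, so the bookkeeping concern you raise is in fact harmless.
\item Naturality of $\n$ at $\id_S\colon\tmn\rightsquigarrow S$, together with $\n_S\circ(\mathfrak{l}_S)_S=\m_S$, gives $\n_\tmn(\alpha)=N(\id_S)^{-1}[\m_S(\alpha)]$.
\item This set is all of $N(\tmn)=\{*\}$ exactly when $s\in\m_S(\alpha)$.
\end{enumerate}

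The citation keeps the paper short by deferring to a more general statement. Your argument instead uses nothing beyond the definitions in this paper: the reindexing in $\P_S$, the components of $\mathfrak{l}_S$, composition of doctrine morphisms, and naturality.

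The same computation also works at every fiber, not just the terminal one. Take $\beta\in\P_S(X)=\P(S\times X)$. Reindexing $(\mathfrak{l}_S)_{S\times X}(\beta)$ along the morphism $X\rightsquigarrow S\times X$ given by $\id_{S\times X}$ returns $\beta$. That morphism is the $\C_S$-pairing of the constant (precomposed with $L_S(!_X)$) with $\id_X$, so $N$ sends it to $x\mapsto(s,x)$. Naturality then yields $\n_X(\beta)=\{x\in M(X)\mid (s,x)\in\m_{S\times X}(\beta)\}$. This is the kind of general description of the extension $(N,\n)$ from which the lemma follows by taking $X=\tmn$.
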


\begin{proof}
    This is a particular case of {\cite[Lem.~2.22]{AbbadiniGuffanti2}}.
\end{proof}

We can strengthen G\"odel's completeness theorem for first-order Boolean doctrines as follows.

\begin{corollary}[of \cref{t:Godel}; order-separation via models, at any fiber]
    \label{c:godel-at-s}
    Let $\P$ be a first-order Boolean doctrine over a small category $\C$.
    For all $S \in \C$ and $\alpha, \beta \in \P(S)$ such that $\alpha \nleq \beta$, there is an $S$-pointed model $(M, \m, s)$ of $\P$ such that $(M, \m, s) \vDash \alpha$ and $(M, \m, s) \nvDash \beta$.
\end{corollary}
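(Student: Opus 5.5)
The plan is to reduce the statement to \cref{t:Godel} applied to the doctrine $\P_S$ obtained from $\P$ by freely adding a constant of type $S$, as in \cref{ss:const}, and then transport the resulting model back to an $S$-pointed model via \cref{t:univ-prop} and \cref{l:model-model-at-S}.

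The first step is to check that $\P_S \colon \C_S\op \to \BA$ is again a first-order Boolean doctrine over a \emph{small} category. Smallness holds because $\C_S$ has the same objects as $\C$ and $\Hom_{\C_S}(X,Y)=\Hom_\C(S\times X,Y)$. Next, note that $\P_S(\tmn)=\P(S\times\tmn)$, which we identify with $\P(S)$ via the canonical isomorphism $S\times\tmn\cong S$. This identification is the one already used in the statement of \cref{l:model-model-at-S}. Under it, the hypothesis $\alpha\nleq\beta$ in $\P(S)$ becomes $\alpha\nleq\beta$ in $\P_S(\tmn)$, since a Boolean isomorphism reflects the order.

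Applying \cref{t:Godel} to $\P_S$ then yields a model $(N,\n)\colon\P_S\to\pws$ with $(N,\n)\vDash\alpha$ and $(N,\n)\nvDash\beta$. Set
\[
(M,\m)\coloneqq(N,\n)\circ(L_S,\mathfrak l_S),
\]
which is a model of $\P$, being a composite of first-order Boolean doctrine morphisms. Let $s\in M(S)=N(S)$ be the image of $*$ under $N(\id_S\colon\tmn\rightsquigarrow S)\colon\{*\}\to N(S)$; here $N(\tmn)=\{*\}$ because $N$ preserves finite products.

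By the uniqueness part of \cref{t:univ-prop}, $(N,\n)$ is exactly the model of $\P_S$ attached to the $S$-pointed model $(M,\m,s)$ in \cref{l:model-model-at-S}. That lemma then gives $(M,\m,s)\vDash\alpha$ and $(M,\m,s)\nvDash\beta$, which is the claim.

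The main point requiring care is the bookkeeping of the identification $\P_S(\tmn)\cong\P(S)$, namely that it matches the convention in \cref{l:model-model-at-S}. Since we invoke that lemma as stated, no further work should be needed there. Everything else is a direct combination of the earlier results.
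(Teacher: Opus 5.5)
Your proposal is correct and matches the paper's proof: apply \cref{t:Godel} to $\P_S$, compose the resulting model with $(L_S,\mathfrak{l}_S)$, take $s$ to be the image of $*$ under $N(\id_S)$, and transfer satisfaction via \cref{t:univ-prop} and \cref{l:model-model-at-S}. Your explicit check that $\C_S$ is small, which is needed to invoke \cref{t:Godel}, is a detail the paper leaves implicit.
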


\begin{proof}
    Let $\P$ be a first-order Boolean doctrine over a small category $\C$, $S\in\C$, $\alpha, \beta \in \P(S)$ such that $\alpha \nleq \beta$. Let $\P_S$ be the first-order Boolean doctrine obtained from $\P$ by adding a constant of type $S$, and let $(L_S,\mathfrak{l}_S)\colon \P \to \P_S$ be the canonical first-order Boolean doctrine morphism.
    
    Since $\alpha \nleq \beta$ in $\P_S(\tmn)=\P(S)$, by \cref{t:Godel} there is a model $(N,\n)$ of $\P_S$ such that $(N,\n)\vDash\alpha$ and $(N,\n)\nvDash\beta$. By the universal property of $(L_S,\mathfrak{l}_S)$ (\cref{t:univ-prop}), the model $(N,\n)$ of $\P_S$ corresponds to an $S$-pointed model $(M,\m,s)$ of $\P$, where $(M, \m)$ is the composite $(N,\n)\circ(L_S,\mathfrak{l}_S)$, and $s\in N(S)$ is the value at $*$ of the function $N(\id_S\colon \tmn\rightsquigarrow S)\colon \{*\}\to N(S)$.
    By \cref{l:model-model-at-S}, since 
    $(N,\n)\vDash\alpha$ we have $(M, \m, s) \vDash \alpha$, and since $(N,\n)\nvDash\beta$ we have $(M, \m, s) \nvDash \beta$.
\end{proof}

\section{The case with equality}

In this section, we consider what happens if the language has equality.
In brief, the appropriate version of G\"odel's completeness theorem still holds, and the proof works the same, except that, in the construction of the ``term'' model of a Henkin elementary first-order Boolean doctrine, one has to identify provably equal terms.

\subsection{Preliminaries: elementary first-order Boolean doctrines}

We start by recalling a modern definition of \emph{elementarity} (i.e.\ the possibility of interpreting equality) in the context of first-order Boolean doctrines.
This notion, whose roots are in Lawvere's work \cite{Lawvere70}, has been introduced by Maietti and Rosolini in \cite{MaiettiRosolini13} in the more general context of primary doctrines.
In this paper, instead of Maietti and Rosolini's definition, we use an equivalent condition presented by Emmenegger, Pasquali and Rosolini in \cite[Prop.~2.5]{EmPaRo20}.

\begin{definition}[Elementary first-order doctrine]\label{def:equality}
    A first-order Boolean doctrine $\P\colon\C\op\to\BA$ is \emph{elementary} if there is a family \[
    \Big(\delta_X\in\P(X\times X)\Big)_{X\in\C}
    \]
    such that, for all $X,Y\in\C$,
    \begin{enumerate}
        \item\label{i:def=1} (Reflexivity)
        denoting by $\Delta_X \colon X \to X \times X$ the diagonal $\ple{\id_X, \id_X}$, 
        \[
        \top_{\P(X)}\leq\P(\Delta_X)(\delta_X) \quad \text{in $\P(X)$};
        \]
        
        \item\label{i:def=2} (Substitutivity)
        denoting by $\pr_1, \pr_2 \colon X\times X \to X$ the two projections, for every $\alpha\in\P(X)$,
        \[
        \P(\pr_1)(\alpha)\land \delta_X\leq \P(\pr_2)(\alpha)\quad \text{in $\P(X\times X)$};
        \]
        
        \item\label{i:def=3} (Product equality) denoting by $\pr_1, \pr_2, \pr_3,\pr_4$ the four projections from $X\times Y\times X\times Y$ to the factors,
        \[
        \Bigl(\P(\ple{\pr_1,\pr_3})(\delta_X)\Bigr)\land \Bigl(\P(\ple{\pr_2,\pr_4})(\delta_Y)\Bigr)\leq \delta_{X\times Y}\quad \text{in $\P(X\times Y\times X\times Y)$}.
        \]
    \end{enumerate}
    For each $X\in\C$, the element $\delta_X$ is called the \emph{fibered equality on $X$}.
\end{definition}

Informally, condition \eqref{i:def=1} in \cref{def:equality} is the reflexivity of the equality relation, i.e.\ $\vdash x=x$. Condition \eqref{i:def=2} is the substitutivity property, i.e.\ $\alpha(x)\land (x=x')\vdash \alpha(x')$. Finally, condition \eqref{i:def=3} can be roughly interpreted as ``$(x=x')\land (y=y')\vdash (x,y)=(x',y')$'', meaning that two pairs coincide if both entries do.

\begin{remark}[{Uniqueness of the fibered equality, \cite[Rem.~2.2(a)]{MaiettiRosolini13}}]
    For every first-order Boolean doctrine $\P \colon \C\op \to \BA$, there is at most one family $(\delta_X)_{X\in\C}$ satisfying the conditions in \cref{def:equality}.
\end{remark}

\begin{remark}[Symmetry of equality, {\cite[p.~450]{MaiettiRosolini13}, or \cite[p.~32]{Pasquali15}}]\label{r:delta-symm}
    For every elementary first-order Boolean doctrine $\P$ and every object $X$ in its base category we have in $\P(X \times X)$
    \[
    \delta_X \leq \P(\ple{\pr_2,\pr_1})(\delta_X),
    \]
    which informally states the symmetry of the equality relation, i.e.\ $x=x'\vdash x'=x$.
\end{remark}

\begin{remark}[Transitivity of equality, {\cite[p.~450]{MaiettiRosolini13}}] \label{r:trans}
    In $\P(X\times X\times X)$ we have
    \[
    \Bigl(\P(\ple{\pr_1,\pr_2})(\delta_X)\Bigr)\land \Bigl(\P(\ple{\pr_2,\pr_3})(\delta_X)\Bigr)\leq \P(\ple{\pr_1,\pr_3})(\delta_X),
    \]
    which informally states the transitivity of the equality relation, i.e.\ $(x=x')\land (x'=x'')\vdash(x=x'')$.
\end{remark}

\begin{remark}[{\cite[Rem.~2.3]{MaiettiRosolini13}}]\label{rmk:aeq-3-conv}
Condition \eqref{i:def=3} in \cref{def:equality} is an equality for every elementary first-order Boolean doctrine, i.e.\ in $\P(X \times Y \times X \times Y)$ we have
\[
 \delta_{X\times Y}= \Bigl(\P(\ple{\pr_1,\pr_3})(\delta_X)\Bigr)\land \Bigl(\P(\ple{\pr_2,\pr_4})(\delta_Y)\Bigr).
\]
\end{remark}

\begin{remark}[Substitutivity for terms, {\cite[Rem.~2.2(d)]{MaiettiRosolini13}}]\label{rmk:aeq}
    Let $\P\colon\C\op\to\BA$ be an elementary first-order Boolean doctrine and let $f\colon X\to Y$ be a morphism in $\C$. We have in $\P(X \times X)$:
    \[
        \delta_X \leq \P(f\times f)(\delta_Y).
    \]
    Informally, this states the substitution property for terms: $x=x'\vdash f(x)=f(x')$. 
\end{remark}

\begin{example}[Syntactic doctrine]\label{fbf=}
    For a first-order theory $\T$ in a language with equality, one can define a syntactic doctrine $\LT_{=}^\T$ similarly to \cref{fbf}. In this case, one has to consider also the formulas involving equality, and take the equivalence relation of equiprovability modulo $\T$ in first-order logic with equality.
    The syntactic doctrine $\LT_{=}^\T$ is an elementary first-order Boolean doctrine: for every context $\vec x = (x_1,\dots,x_n)$, the fibered equality $\delta_{\vec x}\in \LT_=^\T(x_1,\dots,x_n,x'_1,\dots,x'_n)$ is the formula $(x_1=x'_1)\land\dots\land (x_n=x'_n)$.
\end{example}

\begin{example}[Subset doctrine]
    The subset doctrine $\pws\colon\Set\op\to\BA$ is elementary: the fibered equality $\delta_X\in \pws(X\times X)$ on a set $X$ is the equality relation $\Delta_X=\{(x,x)\mid x \in X\}\subseteq X\times X$.
\end{example}

\begin{definition}[Morphism of elementary first-order doctrines]
    An \emph{elementary first-order Boolean doctrine morphism} from $\P\colon\C\op\to \BA$ to $\R \colon \cat{D}\op\to \BA$ is a first-order Boolean doctrine morphism $(M,\m)\colon \P\to\R$ that preserves fibered equalities, i.e.\ such that, for every $X\in\C$, $\m_{X\times X}(\delta^\P_X)=\delta^\R_{M(X)}$.
\end{definition}

\begin{definition}[Model of an elementary first-order Boolean doctrine]\label{d:univ-bool-model-elementary}
    A \emph{model of an elementary first-order Boolean doctrine $\P$} is an elementary first-order Boolean doctrine morphism $(M,\m)$ from $\P$ to the subset doctrine  $\pws$.
\end{definition}

\subsection{G\"odel's completeness theorem, with equality}

As in \cref{n:semantic-conseq}, for a model $(M, \m)$ of an elementary first-order Boolean doctrine $\P$ and $\alpha \in \P(\tmn)$, we write $(M, \m) \vDash \alpha$ if $\m_\tmn(\alpha)$ is the whole singleton $M(\tmn)$.

\begin{theorem} \label{t:quotient}
    Let $\P$ be an elementary first-order Boolean doctrine.
    Let $(M, \m)$ be a model of $\P$ as a (not necessarily elementary) first-order Boolean doctrine.
    There is a model $(M', \m')$ of $\P$ as an \emph{elementary} first-order Boolean doctrine such that, for every $\alpha \in \P(\tmn)$, $(M, \m) \vDash \alpha$ if and only if $(M', \m') \vDash \alpha$.
\end{theorem}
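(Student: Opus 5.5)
The plan is to quotient the model by the interpretation of equality. For each $X\in\C$, let $E_X\coloneqq \m_{X\times X}(\delta_X)\subseteq M(X)\times M(X)$, where we identify $M(X\times X)$ with $M(X)\times M(X)$.

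\textbf{Step 1: each $E_X$ is an equivalence relation.} We transport reflexivity, symmetry and transitivity along $\m$.
\begin{itemize}
\item Reflexivity: apply $\m_X$ to $\top\le\P(\Delta_X)(\delta_X)$ and use naturality, which gives $M(X)\subseteq \Delta_{M(X)}^{-1}[E_X]$.
\item Symmetry: apply $\m$ to \cref{r:delta-symm} and use naturality at $\ple{\pr_2,\pr_1}$.
\item Transitivity: apply $\m$ to \cref{r:trans}.
\end{itemize}
By \cref{rmk:aeq}, each $M(f)\colon M(X)\to M(Y)$ respects these relations. By \cref{rmk:aeq-3-conv}, $E_{X\times Y}$ corresponds under $M(X\times Y)\cong M(X)\times M(Y)$ to the product relation $E_X\times E_Y$.

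\textbf{Step 2: define the quotient model.} Set $M'(X)\coloneqq M(X)/E_X$. Let $M'(f)$ be the map induced by $M(f)$; functoriality is inherited from $M$.

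For preservation of finite products, the canonical map $M(X\times Y)/E_{X\times Y}\to M(X)/E_X\times M(Y)/E_Y$ is a bijection. This follows because $E_{X\times Y}=E_X\times E_Y$ and the quotient map is surjective. Also $M'(\tmn)$ is a singleton.

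\textbf{Step 3: define $\m'$ and check the doctrine axioms.} Substitutivity, transported along $\m$, gives $\m_X(\alpha)\cap$ (pairs in $E_X$ with first coordinate in $\m_X(\alpha)$) $\subseteq$ pairs with second coordinate in $\m_X(\alpha)$. So every $\m_X(\alpha)$ is a union of $E_X$-classes, and we may set $\m'_X(\alpha)\coloneqq q_X[\m_X(\alpha)]$, where $q_X$ is the quotient map. Equivalently, $q_X^{-1}[\m'_X(\alpha)]=\m_X(\alpha)$.

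Since $q_X^{-1}$ is an injective Boolean homomorphism on saturated sets, $\m'_X$ is a Boolean homomorphism. Naturality follows from $q_X\circ M(f)=M'(f)\circ q$.

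Preservation of $\exists$ comes from surjectivity of the quotient maps together with the product identification. A class $[x]$ lies in $\m'_X(\ex{Y}{X}\alpha)$ iff some $(x,y)\in\m_{X\times Y}(\alpha)$, iff some $([x],[y])\in\m'_{X\times Y}(\alpha)$.

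Elementarity is the key point: $\m'_{X\times X}(\delta_X)=q[E_X]$, which under the product identification is exactly the diagonal of $M(X)/E_X$.

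\textbf{Step 4: satisfaction is unchanged.} $M(\tmn)$ is a singleton, so $E_\tmn$ is trivial and $q_\tmn$ is a bijection. Hence $\m'_\tmn(\alpha)$ is the whole of $M'(\tmn)$ iff $\m_\tmn(\alpha)$ is the whole of $M(\tmn)$, which is the required equivalence of $(M,\m)\vDash\alpha$ and $(M',\m')\vDash\alpha$.

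The main obstacle is bookkeeping around the identifications $M(X\times Y)\cong M(X)\times M(Y)$. This shows up in three places: the statement that $E_{X\times Y}$ is the product relation, the bijectivity of the product comparison for $M'$, and the computation that $\m'(\delta_X)$ is the diagonal. I would do these first, using \cref{rmk:aeq-3-conv} transported along $\m$ with naturality at the projections $\ple{\pr_1,\pr_3}$ and $\ple{\pr_2,\pr_4}$.
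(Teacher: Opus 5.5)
Your proposal is correct and follows the paper's proof essentially step by step. You quotient each $M(X)$ by $\m_{X\times X}(\delta_X)$, use reflexivity, symmetry, transitivity, \cref{rmk:aeq} and \cref{rmk:aeq-3-conv} for functoriality and product preservation, and use substitutivity to make $\m'$ well defined; the only cosmetic difference is that you get the Boolean-homomorphism and naturality checks from the injective homomorphism $q_X^{-1}$ on saturated subsets, where the paper checks them by direct element chasing.
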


\begin{proof}
    The idea is to take the quotient of $(M,\m)$ that identifies elements related by the interpreted equality relation.
    
    The functor $M'$ is defined as follows:
    \begin{itemize}
        \item (On objects:) for every object $X$ in the base category $\C$, $M'(X) \coloneqq M(X)/{\sim_X}$, where ${\sim_X}$ is the equivalence relation on $M(X)$ defined by: 
        \[
        x \sim_X y \iff (x,y) \in \m_{X \times X}(\delta_X).
        \]
        In other words, $\sim_X$ is $\m_{X \times X}(\delta_X)$.
    
        \item (On morphisms:) for every morphism $f\colon X\to Y$ in $\C$, $M'(f)$ maps $[x]_X\in M'(X)$ to $[M(f)(x)]_Y\in M'(Y)$.

    \end{itemize}
    
    The fact that $\sim_X$ is an equivalence relation follows from the fact that, 
    applying $\m$ to the inequalities in \cref{def:equality}\eqref{i:def=1} (reflexivity),
    \cref{r:delta-symm} (symmetry), and
    \cref{r:trans} (transitivity), and using the naturality of $\m$, we obtain, respectively,
    \begin{enumerate}
        \item $\top_{\pws(M(X))}\subseteq \pws(\Delta_{M(X)})(\m_{X\times X}(\delta_X))$,
        \item $\m_{X\times X}(\delta_X)\subseteq \pws (\ple{\pr_2,\pr_1})(\m_{X\times X}(\delta_X))$,
        \item $\pws(\ple{\pr_1,\pr_2})(\m_{X\times X}(\delta_X))\cap\pws(\ple{\pr_2,\pr_3})(\m_{X\times X}(\delta_X))\subseteq \pws(\ple{\pr_1,\pr_3})(\m_{X\times X}(\delta_X))$.
    \end{enumerate}
    Unfolding the definitions, these state exactly that ${\sim_X}=\m_{X\times X}(\delta_X)$ is reflexive, symmetric and transitive.
    
    Let us prove that $M'$ is well defined on morphisms.
    Let $f \colon X \to Y$ be a morphism, and let $x,y \in M(X)$ with $x \sim_X y$ (i.e., $(x,y) \in \m_{X \times X}(\delta_X)$).
    By \cref{rmk:aeq} and the naturality of $\m$ at the morphism $f \times f \colon X \times X \to Y \times Y$, we have $(x,y) \in \m_{X \times X}(\P(f\times f)(\delta_Y))=M(f\times f)^{-1}[\m_{Y\times Y}(\delta_Y)]$,
    i.e.\
    \[
    \bigl(M(f)(x),M(f)(y)\bigr)\in\m_{Y\times Y}(\delta_Y),
    \]
    i.e.\ $M(f)(x)\sim_Y M(f)(y)$,
    as desired.
    
    Thus we obtain a well-defined functor
    \[
    M'\coloneqq M(-)_{/\sim_{-}}\colon\C\to\Set.
    \]
    We now show that $M'$ preserves finite products. For the binary case, we show that the bijection
    \[
    M(X\times Y)
    \cong M(X)\times M(Y)
    \]
    descends to a bijection on the quotient
    \[
    M(X\times Y)/{\sim_{X\times Y}} \cong  M(X)/{\sim_{X}}\times M(Y)/{\sim_{Y}}.
    \]
    For all $x,x'\in M(X)$ and $y,y'\in M(Y)$, we have
    \begin{align*}
        &(x,y)\sim_{X\times Y}(x',y') \\
        &\iff (x,y,x',y')\in \m_{X\times Y\times X\times Y}(\delta_{X\times Y})\\
        &\iff (x,y,x',y')\in \m_{X\times Y\times X\times Y}\bigl(\P(\ple{\pr_1,\pr_3})(\delta_X)\land \P(\ple{\pr_2,\pr_4})(\delta_Y)\bigr)&&\text{(by Rem.~\ref{rmk:aeq-3-conv})}\\
        &\iff (x,y,x',y')\in \m_{X\times Y\times X\times Y}\bigl(\P(\ple{\pr_1,\pr_3})(\delta_X)\bigr)\cap\m_{X\times Y\times X\times Y}\bigl( \P(\ple{\pr_2,\pr_4})(\delta_Y)\bigr)\\
        &\iff (x,y,x',y')\in \pws(\ple{\pr_1,\pr_3})(\m_{X\times X}(\delta_X))\cap \pws(\ple{\pr_2,\pr_4})(\m_{Y\times Y}(\delta_Y))&&\text{(by nat.\ of $\m$)}\\
        &\iff (x,x')\in \m_{X\times X}(\delta_X)\text{ and }(y,y')\in\m_{Y\times Y}(\delta_Y)\\
        &\iff x\sim_X x'\text{ and }y\sim_Y y',
    \end{align*}
    as desired. This settles the binary case.
    The nullary case is immediate.
    
    We define the component at $X\in\C$ of the natural transformation $\m'\colon \P\to\pws\circ {M'}\op$ as
        \begin{align*}
        \m'_X \colon \P(X) & \longrightarrow \pws(M'(X))\\
        \alpha & \longmapsto \bigl\{[x]_{X} \mid x\in \m_X(\alpha)\bigr\}.
    \end{align*}
    We prove that the condition $x \in \m_X(\alpha)$ appearing in the definition of $\m'_X(\alpha)$
    does not depend on the choice of $x$ as a representative of its $\sim_X$-equivalence class.
    For every $X\in \C$, $\alpha\in\P(X)$, $x,y\in M( X)$ such that $x \in \m_X(\alpha)$ and $x\sim_X y$ we have 
    \begin{align*}
        (x,y)\in \pws(\pr_1)(\m_X(\alpha))\cap\m_{X\times X}(\delta_X)\equalexpl{by nat.\ of $\m$ at\\  $\pr_1\colon X\times X\to X$}\m_{X\times X}(\P(\pr_1)(\alpha)\land \delta_X)\subseteqexpl{Def.~\ref{def:equality}\eqref{i:def=2}} \m_{X\times X}(\P(\pr_2)(\alpha))\equalexpl{by nat.\ of $\m$ at\\ $\pr_2\colon X\times X\to X$}\pws(\pr_2)(\m_X(\alpha)),
    \end{align*}
    i.e.\ $y\in \m_X(\alpha)$, as desired.

    For every $X \in \C$, the function $\m'_X$ preserves the negation because, for every $\alpha \in \P(X)$ and $x\in M(X)$,
    \[
        [x]_X \in \m'_X(\lnot\alpha) \iff x\in\m_X(\lnot\alpha)
        \iff x\notin\m_X(\alpha)
        \iff [x]_X \notin \m'_X(\alpha).
    \]
    Similarly, it is easily seen that $\m'_X$ preserves $\top$ and $\land$.
    
    The naturality of $\m'$ holds because, for all $X, X' \in \C$, every $\alpha\in \P(X)$, every morphism $f\colon X'\to X$ in $\C$ and every $x\in M( X')$,
    \begin{align*}
        [x]_{X'}\in \m'_{X'}(\P(f)(\alpha))
        & \iff x\in \m_{X'}(\P(f)(\alpha))\\
        & \iff x\in\pws(M(f))(\m_{X}(\alpha))&&\text{(by nat.\ of $\m$ at $f\colon X'\to X$)}\\
        & \iff M(f)(x) \in \m_{X}(\alpha)\\
        & \iff [M(f)(x)]_{X} \in \m'_{X}(\alpha)\\
        & \iff M'(f)([x]_{X'} )\in \m'_{X}(\alpha)\\
        & \iff [x]_{X'} \in \pws (M'(f)) (\m'_{X}(\alpha)).
    \end{align*}

    Let us prove that $\m'$ preserves the existential quantifier.
    For $\alpha\in\P(X\times Y)$,
    we shall prove
    \[
    \m'_X(\ex{Y}{X}\alpha)=\bigl\{[x]_X \in M'(X) \mid \text{there is }[y]_Y\in M'(Y)\text{ s.t.\ } ([x]_X,[y]_Y)\in \m'_{X\times Y}(\alpha)\bigr\},
    \]
    i.e.
    \begin{equation*}
        \bigl\{[x]_X \mid x\in \m_X(\ex{Y}{X}\alpha)\bigr\} = \bigl\{[x]_X \mid \text{there is }[y]_Y\text{ s.t.\ } (x,y)\in\m_{X\times Y}(\alpha)\bigr\}.
    \end{equation*}
    This holds because, for every $x \in M(X)$,
    \begin{align*}
        x\in \m_X(\ex{Y}{X}\alpha)
        &\iff x\in \ex{M(Y)}{M(X)}\m_{X\times Y}(\alpha) &&\text{(since $\m$ preserves $\exists$)}\\
        &\iff \text{there is $y\in M(Y)$ s.t.\ } (x,y)\in \m_{X\times Y}(\alpha)\\
        &\iff \text{there is $[y]_Y\in M'(Y)$ s.t.\ } ([x]_X,[y]_Y)\in \m'_{X\times Y}(\alpha).
    \end{align*}
    
    Next, we show that $\m'$ preserves equality. For every $X\in\C$ we have:
    \begin{align*}
    \m'_{X\times X}(\delta_X)&= \bigl\{[(x,y)]_{X\times X}\mid (x,y)\in\m_{X\times X}(\delta_{X})\bigr\}\\
    &=\bigl\{([x]_X,[y]_X)\mid (x,y)\in\m_{X\times X}(\delta_{X})\bigr\}\\
    &=\bigl\{([x]_X,[y]_X)\mid x \sim_X y\bigr\}\\
    &=\Delta_{M'(X)}.
    \end{align*}
    
    This concludes the proof that $(M',\m')$ is a model of $\P$ (as an elementary first-order Boolean doctrine).
    
    To conclude, we observe that, for every $\alpha\in\P(\tmn)$,
    \[
        (M, \m) \vDash \alpha \iff \m_\tmn(\alpha)\text{ is the singleton}\iff \m'_\tmn(\alpha)\text{ is the singleton}\iff (M', \m') \vDash \alpha. \qedhere
    \]
\end{proof}

\begin{theorem}[Order separation via models]
\label{t:Godel_elem}
    Let $\P$ be an elementary first-order Boolean doctrine over a small category.
    If $\alpha \nleq \beta$ in $\P(\tmn)$, then $\P$ has a model $(M, \m)$ such that $(M, \m) \vDash \alpha$ and $(M, \m) \nvDash \beta$.
\end{theorem}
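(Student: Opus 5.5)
The plan is to combine \cref{t:Godel} with \cref{t:quotient}, without redoing the Henkin construction. An elementary first-order Boolean doctrine $\P$ is in particular a first-order Boolean doctrine over a small category. The hypothesis $\alpha \nleq \beta$ in $\P(\tmn)$ only involves the Boolean algebra $\P(\tmn)$ and not the fibered equality. So \cref{t:Godel} applies directly and gives a model $(M,\m)$ of $\P$, regarded as a not-necessarily-elementary first-order Boolean doctrine, such that $(M,\m)\vDash\alpha$ and $(M,\m)\nvDash\beta$.

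Next I will apply \cref{t:quotient} to this $(M,\m)$. It produces a model $(M',\m')$ of $\P$ as an \emph{elementary} first-order Boolean doctrine, that is, an elementary first-order Boolean doctrine morphism $\P\to\pws$. This model satisfies exactly the same elements of $\P(\tmn)$ as $(M,\m)$. Hence $(M',\m')\vDash\alpha$ and $(M',\m')\nvDash\beta$, which is the claim.

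The only point needing care is that the satisfaction relation $\vDash$ for elementary models, as defined just before \cref{t:quotient}, agrees with the one in \cref{n:semantic-conseq}. It does: in both cases it means that $\m_\tmn(\alpha)$ is all of the singleton $M(\tmn)$. The conclusion of \cref{t:quotient} is phrased in these terms, so it transfers directly. I expect no real obstacle, since the substantive work was already done in \cref{t:Godel}, which rests on \cref{t:extension-to-Henkin} and \cref{l:Henkin}, and in the quotient construction of \cref{t:quotient}.
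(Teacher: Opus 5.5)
Your proposal is correct and is exactly the paper's proof: apply the non-elementary order-separation result \cref{t:Godel} to $\P$ viewed as a plain first-order Boolean doctrine, then pass to the elementary quotient model given by \cref{t:quotient}, which satisfies the same elements of $\P(\tmn)$. Your remark that both notions of $\vDash$ coincide is the only bookkeeping needed, and it holds.
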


\begin{proof}
    The statement follows from its non-elementary version (\cref{t:Godel}), in virtue of \cref{t:quotient}.
\end{proof}

We say that an elementary first-order Boolean doctrine $\P$ is \emph{consistent} if it is so as a first-order Boolean doctrine, i.e.\ if $\top_{\P(\tmn)} \neq \bot_{\P(\tmn)}$.

\begin{corollary}[G\"odel's completeness for elementary first-order Boolean doctrines]\label{c:Godel_elem}
	Every consistent elementary first-order Boolean doctrine over a small category has a model.
\end{corollary}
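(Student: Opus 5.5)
The plan is to derive this directly from \cref{t:Godel_elem}, in the same way that \cref{c:Godel} follows from \cref{t:Godel}. Let $\P$ be a consistent elementary first-order Boolean doctrine over a small category. By definition, consistency means $\top_{\P(\tmn)} \neq \bot_{\P(\tmn)}$. Since $\bot_{\P(\tmn)}$ is the least element of $\P(\tmn)$, we have $\bot \le \top$, so the condition $\top \neq \bot$ is equivalent to $\top_{\P(\tmn)} \nleq \bot_{\P(\tmn)}$.

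We then apply \cref{t:Godel_elem} with $\alpha \coloneqq \top_{\P(\tmn)}$ and $\beta \coloneqq \bot_{\P(\tmn)}$. This yields a model $(M,\m)$ of $\P$ as an elementary first-order Boolean doctrine, that is, an elementary first-order Boolean doctrine morphism $\P \to \pws$, which is precisely what the statement asks for. The additional information $(M,\m)\vDash\top$ and $(M,\m)\nvDash\bot$ is not needed: it holds automatically, because $\m_\tmn$ is a Boolean homomorphism.

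There is essentially no obstacle here. All the substantive work has already been done: the Henkin construction in \cref{t:extension-to-Henkin} and \cref{l:Henkin}, which produces a non-elementary model, and the quotient by the interpreted equality in \cref{t:quotient}, which turns it into an elementary one. Both are packaged into \cref{t:Godel_elem}. The only point to check is that the smallness hypothesis of \cref{t:Godel_elem} is met, and it is, since it is assumed in the statement.
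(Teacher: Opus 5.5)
Your proof is correct and essentially the same as the paper's. The paper applies \cref{t:quotient} to the model given by the non-elementary \cref{c:Godel}, while you specialize \cref{t:Godel_elem} (which is itself \cref{t:Godel} combined with \cref{t:quotient}) to $\top_{\P(\tmn)} \nleq \bot_{\P(\tmn)}$, so both arguments rest on the same two ingredients.
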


\begin{proof}
    The statement follows from its non-elementary version (\cref{c:Godel}), in virtue of \cref{t:quotient}.
\end{proof}

\subsection{Generalization to open formulas and pointed models}
\begin{definition}[Model at an object of an elementary first-order Boolean doctrine] \label{d:bool-mod-at-S-elementary}
    Let $\P \colon \C\op \to  \BA$ be an elementary first-order Boolean doctrine, and let $S \in \C$.
    An \emph{$S$-pointed model of $\P$} is a triple $(M,\m,s)$ where $(M,\m)\colon \P \to \pws$ is a model of $\P$ (i.e.\ an elementary first-order Boolean doctrine morphism from $\P$ to the subset doctrine $\pws$) and $s \in M(S)$.
\end{definition}

\begin{remark}[{\cite[Prop.~5.4 and Thm.~6.3(i)]{GuffConst}}]\label{r:const_aeq}
    Let $\P$ be an elementary first-order Boolean doctrine over $\C$, let $S\in\C$, and let $(L_S,\mathfrak{l}_S)\colon \P\to\P_S$ be the construction that freely adds a constant of type $S$ to $\P$ as in \cref{ss:const}. Then $\P_S$ is an elementary first-order Boolean doctrine, and $(L_S,\mathfrak{l}_S)$ is an elementary first-order Boolean doctrine morphism. Moreover, the universal property of $(L_S,\mathfrak{l}_S)$ and $\id_S\colon\tmn\rightsquigarrow S$ (\cref{t:univ-prop}) respects the elementary structure in the following sense:
     \begin{quote}
        For every elementary first-order Boolean doctrine $\mathbf{R}\colon\cat{D}\op\to\BA$, elementary first-order Boolean doctrine morphism $(M,\m)\colon \P\to \mathbf{R}$ and morphism $c \colon \tmn_\cat{D} \to M(S)$ in $\cat{D}$, there is a unique elementary first-order Boolean doctrine morphism $(N,\n) \colon \P_S \to \mathbf{R}$ with $(M,\m)=(N,\n)\circ(L_S,\mathfrak{l}_S)$ and $N(\id_S\colon \tmn \rightsquigarrow S)=c$.
    \end{quote}
\end{remark}

As in \cref{n:semantic-conseq-at-S}, for an $S$-pointed model $(M, \m, s)$ of an elementary first-order Boolean doctrine $\P$ and $\alpha\in \P(S)$, we write $(M, \m, s) \vDash \alpha$ if $s \in \m_S(\alpha)$.

\begin{theorem} \label{t:quotient-elementary}
    Let $\P$ be an elementary first-order Boolean doctrine over $\C$, and let $S \in \C$.
    Let $(M, \m, s)$ be an $S$-pointed model of $\P$ as a (not necessarily elementary) first-order Boolean doctrine.
    There is an $S$-pointed model $(M', \m', s')$ of $\P$ as an \emph{elementary} first-order Boolean doctrine such that, for every $\alpha \in \P(S)$, $(M, \m,s) \vDash \alpha$ if and only if $(M', \m',s') \vDash \alpha$.
\end{theorem}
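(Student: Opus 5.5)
We take $(M',\m')$ to be exactly the quotient model built in the proof of \cref{t:quotient} from $(M,\m)$, and set $s' \coloneqq [s]_S \in M'(S) = M(S)/{\sim_S}$. The proof of \cref{t:quotient} already shows that $(M',\m')$ is an elementary first-order Boolean doctrine morphism $\P \to \pws$. Hence $(M',\m',s')$ is an $S$-pointed model of $\P$ as an elementary first-order Boolean doctrine in the sense of \cref{d:bool-mod-at-S-elementary}. Everything about the construction is reused verbatim; the only new point is the pointing.

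It remains to compare satisfaction at $S$. Let $\alpha \in \P(S)$. By definition, $(M',\m',s') \vDash \alpha$ means $[s]_S \in \m'_S(\alpha) = \{[x]_S \mid x \in \m_S(\alpha)\}$. In the proof of \cref{t:quotient} we showed that membership $x \in \m_S(\alpha)$ is invariant under $\sim_S$: if $x \in \m_S(\alpha)$ and $x \sim_S y$, then $y \in \m_S(\alpha)$, by substitutivity \cref{def:equality}\eqref{i:def=2} together with naturality of $\m$. By the symmetry of $\sim_S$, this gives $[s]_S \in \m'_S(\alpha)$ if and only if $s \in \m_S(\alpha)$. The latter is precisely $(M,\m,s) \vDash \alpha$.

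An alternative route would pass through $\P_S$, using \cref{r:const_aeq}, \cref{t:quotient} applied to $\P_S$, and \cref{l:model-model-at-S}. The direct argument above avoids this. There is no real obstacle: the only step needing care is the representative-independence of $\m'_S$, which was already established in \cref{t:quotient}, so the proof reduces to that observation.
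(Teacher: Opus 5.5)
Your proof is correct, but it takes a different route from the paper's. The paper does not use how the quotient in \cref{t:quotient} is built. It turns $(M,\m,s)$ into a model $(N,\n)$ of $\P_S$ via \cref{t:univ-prop}, then applies \cref{t:quotient} as a black box to $\P_S$ (which is elementary by \cref{r:const_aeq}). It turns the resulting elementary model back into an $S$-pointed elementary model of $\P$ via the elementary universal property in \cref{r:const_aeq}, and transfers satisfaction at both ends with \cref{l:model-model-at-S}.

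You instead reuse the concrete quotient $(M',\m')$ of $(M,\m)$ and take $s' = [s]_S$. The representative-independence established inside the proof of \cref{t:quotient} holds in every fiber, so $[s]_S \in \m'_S(\alpha)$ if and only if $s \in \m_S(\alpha)$. The last line of that proof is just the case $S = \tmn$ of this observation.

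Your argument is shorter and self-contained. It needs neither \cref{l:model-model-at-S} nor the externally cited facts that $\P_S$ is elementary and that its universal property respects equality. The paper's argument, in exchange, depends only on the statement of \cref{t:quotient}, not on how the quotient is built. It also follows the same reduction-to-$\P_S$ pattern used for \cref{c:godel-at-s} and \cref{t:S-types-are-ultrafilters}.

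Unwinding the definitions, the two routes produce the same pointed model up to canonical identifications. Since $(L_S,\mathfrak{l}_S)$ preserves equality, $\n$ sends the fibered equality of $\P_S$ at $X$ to $\m_{X\times X}(\delta_X)$. So $N$ and $M$ are quotiented by the same relations.
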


\begin{proof}
    By \cref{r:const_aeq}, the first-order Boolean doctrine $\P_S$ is elementary.
    
    The (not necessarily elementary) $S$-pointed model $(M, \m, s)$ of $\P$ corresponds to a (not necessarily elementary) model $(N,\n)$ of $\P_S$ in light of \cref{t:univ-prop}. By \cref{t:quotient}, we define a model $(N',\n')$ of the elementary first-order Boolean doctrine $\P_S$, which in turn corresponds via \cref{r:const_aeq} to an $S$-pointed model $(M',\m',s')$ of the elementary first-order Boolean doctrine $\P$.
    
    To conclude, we observe that, for every $\alpha\in\P(S)$, 
    \[
    (M, \m,s) \vDash \alpha \iffexpl{by \cref{l:model-model-at-S}}  (N, \n) \vDash \alpha \iffexpl{by \cref{t:quotient}} (N', \n') \vDash \alpha \iffexpl{by \cref{l:model-model-at-S}}  (M', \m',s') \vDash \alpha.\qedhere
    \]
\end{proof}

\begin{corollary}[of \cref{t:Godel_elem}; order-separation via models, at any fiber]
    \label{c:godel-at-s_aeq}
    Let $\P$ be an elementary first-order Boolean doctrine over a small category $\C$.
    For all $S \in \C$ and $\alpha, \beta \in \P(S)$ such that $\alpha \nleq \beta$, there is an $S$-pointed model $(M, \m, s)$ of $\P$ such that $(M, \m, s) \vDash \alpha$ and $(M, \m, s) \nvDash \beta$.
\end{corollary}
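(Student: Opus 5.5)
The plan is to combine the non-elementary version, \cref{c:godel-at-s}, with the quotient construction for pointed models, \cref{t:quotient-elementary}, in the same way that \cref{t:Godel_elem} was deduced from \cref{t:Godel} via \cref{t:quotient}.

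First, let $S \in \C$ and $\alpha, \beta \in \P(S)$ with $\alpha \nleq \beta$. Regard $\P$ merely as a first-order Boolean doctrine, forgetting the fibered equality. Then \cref{c:godel-at-s} applies directly, since $\C$ is small. It yields an $S$-pointed model $(M, \m, s)$ of $\P$ as a (not necessarily elementary) first-order Boolean doctrine, with $(M, \m, s) \vDash \alpha$ and $(M, \m, s) \nvDash \beta$.

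Second, apply \cref{t:quotient-elementary} to this pointed model. This produces an $S$-pointed model $(M', \m', s')$ of $\P$ as an elementary first-order Boolean doctrine. For every $\gamma \in \P(S)$, we have $(M, \m, s) \vDash \gamma$ if and only if $(M', \m', s') \vDash \gamma$. Taking $\gamma = \alpha$ and $\gamma = \beta$ gives $(M', \m', s') \vDash \alpha$ and $(M', \m', s') \nvDash \beta$, which is the claim.

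There is essentially no obstacle: all the work sits in \cref{c:godel-at-s}, \cref{t:quotient-elementary}, and \cref{r:const_aeq}, which are already available. The only point needing care is that the satisfaction relation for pointed models, $s \in \m_S(\gamma)$, is the same notion in both results, so the equivalence transfers verbatim.

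An alternative route would pass to $\P_S$, which is elementary by \cref{r:const_aeq}, apply \cref{t:Godel_elem} there, and then translate back via \cref{r:const_aeq} and \cref{l:model-model-at-S}. I would use the first route because it is shorter.
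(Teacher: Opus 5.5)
Your proposal is correct and matches the paper's proof exactly. You apply the non-elementary \cref{c:godel-at-s} to obtain a separating $S$-pointed model, then use \cref{t:quotient-elementary} to replace it by an elementary $S$-pointed model that satisfies the same elements of $\P(S)$.
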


\begin{proof}
    The statement follows from its non-elementary version (\cref{c:godel-at-s}), in virtue of \cref{t:quotient-elementary}.
\end{proof}

\section{Consequence: the type space functor is the Stone dual of the doctrine}
\label{s:type-space-functor}
\subsection{The space of types is the Stone dual of the algebra of sentences}

Stone duality says that the category $\BA$ of Boolean algebras and Boolean homomorphisms is dually equivalent to the category $\Stone$ of Stone spaces and continuous maps. (We recall that a Stone space is a compact space that is \emph{totally separated}, i.e., in which clopen subsets separate distinct points.)

Recall that a \emph{model} of a first-order Boolean doctrine $\P$ is a first-order Boolean doctrine morphism $(M,\m)$ from $\P$ to the subset doctrine  $\pws$.
\[
    \begin{tikzcd}
        \C\op && \Set\op \\
        \\
        & \BA
        \arrow["M\op", from=1-1, to=1-3]
        \arrow[""{name=0, anchor=center, inner sep=0}, "\P"', from=1-1, to=3-2]
        \arrow[""{name=1, anchor=center, inner sep=0}, "{\pws}", from=1-3, to=3-2]
        \arrow["\m", curve={height=-6pt}, shorten <=7pt, shorten >=7pt, from=0, to=1]
    \end{tikzcd}
\]
Moreover, recall from \cref{n:semantic-conseq} that, for $\alpha \in \P(\tmn)$, we write $(M, \m) \vDash \alpha$ if $\m_\tmn(\alpha)$ is the whole singleton $M(\tmn)$.

\begin{definition}\label{d:mod-p}
    Let $\P$ be a first-order Boolean doctrine. We define the class
    \[
    \Mod(\P)\coloneqq\{(M,\m)\mid (M,\m)\text{ is a model of $\P$}\}.
    \]
\end{definition}

We define the relation $\equiv$  on $\Mod(\P)$ of \emph{elementary equivalence}\footnote{We use the expression ``elementary equivalence'' in analogy with classical model theory, where two structures are elementarily equivalent if they satisfy the same first-order sentences.
Our terminology does not assume that $\P$ is elementary (i.e.\ equipped with equality, see \cref{def:equality}).} as follows:
\[
(M,\m) \equiv (M',\m') \iff \Big(\text{for all }\alpha\in\P(\tmn),\, (M,\m) \vDash \alpha\Leftrightarrow (M',\m') \vDash \alpha\Big).
\]

\begin{definition}[Space of types]\label{d:types}
    Let $\P$ be a first-order Boolean doctrine. 
    We define the set\footnote{While $\Mod(\P)$ may not be small, note that the quotient $\Mod(\P)/{\equiv}$ is a set: from the definition, we can see that it has at most $\pws(\P(\tmn))$ elements. Additionally, if the base category is small, \cref{t:types-are-ultrafilters} shows that $\Mod(\P)/{\equiv}$ is in bijection with the set of ultrafilters of $\P(\tmn)$.} of \emph{types of $\P$} as the quotient
    \[
    \Typ(\P)\coloneqq  {\Mod(\P)}/{\equiv}.
    \]
    We equip $\Typ(\P)$ with the topology generated by the sets of the form
    \begin{equation}\label{eq:base-topology}
        \llbracket \alpha \rrbracket \coloneqq \Bigl\{\bigl[(M, \m)\bigr] \mid  (M, \m) \vDash \alpha \Bigr\},    \qquad \qquad \alpha \in \P(\tmn).
    \end{equation}
    (The condition $(M, \m) \vDash \alpha$ does not depend on the choice of $(M, \m)$ as a representative of its equivalence class.)
\end{definition}

For a Boolean algebra $B$, let us denote by $\Ult(B)$ the set of ultrafilters of $B$. Moreover, recall that $\Ult(B)$ is a Stone space whose clopens are precisely the sets of the form
\[
\widehat{a} \coloneqq \{U \in \Ult(B) \mid a \in U\},   \qquad \qquad a \in B.
\]

\begin{theorem}[The space of types is the Stone dual of the algebra of sentences]\label{t:types-are-ultrafilters}
    Let $\P$ be a first-order Boolean doctrine over a small category $\C$.
    The space $\Typ(\P)$ is the Stone dual of the Boolean algebra $\P(\tmn)$, as witnessed by the homeomorphism:
    \begin{align*}
        \Typ(\P) & \longrightarrow \Ult(\P(\tmn))\\
        \bigl[(M, \m)\bigr] & \longmapsto \{\alpha \in \P(\tmn) \mid (M, \m) \vDash \alpha\}.
    \end{align*}
\end{theorem}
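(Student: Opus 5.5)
We define $\Phi\colon \Typ(\P)\to\Ult(\P(\tmn))$ by $\Phi([(M,\m)]) \coloneqq \{\alpha\in\P(\tmn)\mid (M,\m)\vDash\alpha\}$ and check, in turn, that it is well defined, lands in ultrafilters, is bijective, and is a homeomorphism.

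\emph{Well-definedness and ultrafilter property.} The set $\Phi([(M,\m)])$ depends only on the $\equiv$-class, by the definition of elementary equivalence. To see that it is an ultrafilter, note that $(M,\m)\vDash\alpha$ means $\m_\tmn(\alpha)=\{*\}$, where $M(\tmn)=\{*\}$ as in \cref{n:semantic-conseq}. Hence $\Phi([(M,\m)])=\m_\tmn^{-1}[\{\{*\}\}]$, the preimage of the unique ultrafilter $\{\{*\}\}$ of the two-element algebra $\pws(\{*\})$ under the Boolean homomorphism $\m_\tmn$. Preimages of ultrafilters under Boolean homomorphisms are ultrafilters, so $\Phi([(M,\m)])\in\Ult(\P(\tmn))$.

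\emph{Injectivity and surjectivity.} Injectivity is immediate: if two models satisfy the same sentences, they are $\equiv$-equivalent by definition. Surjectivity is the substantive step. Given an ultrafilter $U$ of $\P(\tmn)$, we imitate the proof of \cref{t:Godel}. We form the quotient $\P_{\!/U}$ of \cref{r:quotient-doctrine-filter}, extend it to a Henkin doctrine by \cref{t:extension-to-Henkin}, and take the term model of \cref{l:Henkin}. Composing gives a model $(M,\m)$ of $\P$ with $\m_\tmn=\n_\tmn\circ\r_\tmn\circ\pi_\tmn$. Since $\P_{\!/U}(\tmn)=2$ and the maps from $2$ to the two-element algebras are identities, $\m_\tmn(\alpha)=\{*\}$ if and only if $\pi_\tmn(\alpha)=\top$, that is, if and only if $\alpha\in U$. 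Thus $\Phi([(M,\m)])=U$. Alternatively, for an approach relying only on statements, one could argue that for each $\beta\notin U$ we have $\top\nleq\beta$ relative to $U$; but the direct construction is cleaner, so we use it. This step is the main obstacle, since it is where smallness of $\C$ and the Henkin machinery enter; everything else is bookkeeping.

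\emph{Homeomorphism.} By construction $\Phi(\llbracket\alpha\rrbracket)=\widehat{\alpha}$ for each $\alpha\in\P(\tmn)$, and since $\Phi$ is a bijection, $\Phi^{-1}(\widehat{\alpha})=\llbracket\alpha\rrbracket$ as well. The sets $\llbracket\alpha\rrbracket$ generate the topology of $\Typ(\P)$, and the sets $\widehat{\alpha}$ form a basis of the Stone topology on $\Ult(\P(\tmn))$. A bijection mapping a generating family of opens onto a generating family of opens is a homeomorphism.
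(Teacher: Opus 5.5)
Your proposal is correct and follows essentially the same route as the paper: the ultrafilter arises as a preimage under the Boolean homomorphism $\m_\tmn$, injectivity holds by definition, surjectivity goes through $\P_{\!/U}$, and the homeomorphism comes from matching $\llbracket\alpha\rrbracket$ with $\widehat{\alpha}$. The only difference is that the paper gets a model of $\P_{\!/U}$ by citing \cref{c:Godel} as a black box, since any model works because $\n_\tmn\colon 2\to\pws(\{*\})$ is forced to be the isomorphism, whereas you re-run the Henkin extension and term-model construction; your ``alternatively'' aside is unnecessary and can be dropped.
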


\begin{proof}
    By definition of the equivalence relation on $\Mod(\P)$, the set $\{\alpha \in \P(\tmn) \mid (M, \m) \vDash \alpha\}$ does not depend on the choice of $(M, \m)$ as a representative of its equivalence class. 
    It is an ultrafilter because, by definition of $\vDash$, it is the preimage of $\{\top\}$ under the Boolean homomorphism
    \[
    \P(\tmn)\xlongrightarrow{\m_\tmn}\pws(M(\tmn)) \cong 2.
    \]

    The function is injective by definition of the equivalence relation on $\Mod(\P)$.
    
    To prove surjectivity, let
    \[
    U \subseteq \P(\tmn)
    \]
    be an ultrafilter.
    Then $\P_{\!/U}\colon\C\op\to\BA$ is a first-order Boolean doctrine and $\P_{\!/U}(\tmn)=2$ (see \cref{r:quotient-doctrine-filter}).
    By G\"{o}del's completeness theorem (\cref{c:Godel}), $\P_{\!/U}$ has a model $(N,\n) \colon \P_{\!/U} \to \pws$.
    We define a model $(M,\m)$ of $\P$ as the composite
    \[
    \P 
    \xrightarrow{(\id_\C,\pi)}
    \P_{\!/U}
    \xrightarrow{(N,\n)}
    \pws.
    \]
    
    It remains to show that $U = \{\alpha \in \P(\tmn) \mid (M, \m) \vDash \alpha\}$.
    This holds because, for every $\alpha\in \P(\tmn)$, setting $\{*\} = M(\tmn) = N(\tmn)$
    \begin{align*}
        (M,\m)\vDash\alpha &\iff (N,\n)\circ(\id_\C,\pi) \vDash\alpha\\
        &\iff \n_\tmn(\pi_\tmn(\alpha))=\top_{\pws(\{*\})}\\
        &\iff  \pi_\tmn(\alpha)=\top_{{\P}_{\!/U}(\tmn)}&&\text{(since $\n_\tmn \colon \P_{\!/U}(\tmn) \to \pws(\{*\})$ is the isomorphism $2 \to 2$)}\\
        &\iff \alpha\in U.
    \end{align*}
    This proves that the function in the statement is surjective.
    
    Finally, we prove that this bijection is a homeomorphism. 
    For $\alpha \in \P(\tmn)$, the preimage of the basic clopen $ \widehat{\alpha} = \{U \in \Ult(\P(\tmn)) \mid \alpha \in U\}$ of $\Ult(\P(\tmn))$
    under the bijection is the set
    \begin{align*}
        \Bigl\{\bigl[(M, \m)\bigr] \mathrel{\big|} \bigl\{\beta \in \P(\tmn) \mid (M, \m) \vDash \beta\bigr\} \in \widehat{\alpha}\Bigr\} & = \Bigl\{\bigl[(M, \m)\bigr] \mathrel{\big|} \alpha \in \bigl\{\beta \in \P(\tmn) \mid (M, \m) \vDash \beta\bigr\}\Bigr\} \\
        & = \Bigl\{\bigl[(M, \m)\bigr] \mathrel{\big|} (M, \m) \vDash \alpha \Bigr\}\\
        &=\llbracket \alpha \rrbracket &&\text{(by \eqref{eq:base-topology})}.
    \end{align*}
    Thus, the preimage of the basis of clopens $\{ \widehat{\alpha} \mid \alpha\in\P(\tmn)\}$ of $\Ult(\P(\tmn))$ under the bijection in the statement is precisely the set of generators $\{\llbracket \alpha \rrbracket\mid \alpha\in\P(\tmn)\}$ of the topology of $\Typ(\P)$. Therefore, the bijection is a homeomorphism.
\end{proof}

In conclusion, G\"{o}del's completeness theorem has as a consequence that the space of types is the Stone dual of the Boolean algebra of closed formulas.

\begin{remark}[Clopens of the space of types] \label{r:clopens-of-space-of-types}
    It follows from the proof of \cref{t:types-are-ultrafilters} that, for $\P$ a first-order Boolean doctrine over a small category, the clopens of $\Typ(\P)$ are precisely the sets of the form
    \begin{equation*}
        \llbracket \alpha \rrbracket \coloneqq \Bigl\{\bigl[(M, \m)\bigr] \mid  (M, \m) \vDash \alpha \Bigr\},    \qquad \qquad \alpha \in \P(\tmn).
    \end{equation*}
\end{remark}

\subsection{The space of \texorpdfstring{$S$}{S}-types is the Stone dual of the algebra of \texorpdfstring{$S$}{S}-formulas}\label{ss:s-typ}

Recall that an $S$-pointed model of a first-order Boolean doctrine $\P \colon \C\op \to  \BA$ is a triple $(M,\m,s)$ with $(M,\m)\colon \P \to \pws$ a model of $\P$ and $s \in M(S)$.
Moreover, recall that, for $\alpha \in \P(S)$, we write $(M, \m, s) \vDash \alpha$ if $s \in \m_S(\alpha)$.

\begin{definition}
    Let $\P$ be a first-order Boolean doctrine over $\C$ and let $S\in\C$. We define the class
    \[
    \Mod_S(\P)\coloneqq\bigl\{(M,\m,s)\mid (M,\m,s)\text{ is an $S$-pointed model of $\P$}\bigr\}.
    \]
\end{definition}

We define the relation $\equiv$ on $\Mod_S(\P)$ of \emph{elementary equivalence} as follows:
\[
(M,\m,s)\equiv (M',\m',s') \iff \Big(\text{for all }\alpha\in\P(S),\, (M,\m,s) \vDash \alpha\Leftrightarrow (M',\m',s') \vDash \alpha\Big).
\]

\begin{definition}[Space of types at an object]\label{d:types-at-S}
    Let $\P$ be a first-order Boolean doctrine over $\C$, and $S\in\C$. 
    We define the set of \emph{$S$-types of $\P$} as the quotient
    \[
    \Typ_S(\P)\coloneqq  {\Mod_S(\P)}/{\equiv}.
    \]
    We equip $\Typ_S(\P)$ with the topology generated by the sets of the form
    \begin{equation*}
        \llbracket \alpha \rrbracket \coloneqq \Bigl\{\bigl[(M, \m,s)\bigr] \mid  (M, \m,s) \vDash \alpha \Bigr\},    \qquad \qquad \alpha \in \P(S).
    \end{equation*}
    (The condition $(M, \m, s) \vDash \alpha$ does not depend on the choice of $(M, \m,s)$ as a representative of its equivalence class.)
\end{definition}

\begin{remark} \label{r:bijection}
    Let $\P$ be a first-order Boolean doctrine over $\C$, let $S\in\C$, and let $(L_S,\mathfrak{l}_S)\colon \P\to\P_S$ be the construction that freely adds a constant of type $S$ to $\P$. By \cref{t:univ-prop}, the following is a bijection
    \begin{align*}
        \Mod(\P_S) &\longrightarrow \Mod_S(\P) \\
        (N,\n)&\longmapsto \Big((N,\n)\circ (L_S,\mathfrak{l}_S), N(\id_S\colon \tmn \rightsquigarrow S)\Big).
    \end{align*}
    By \cref{l:model-model-at-S}, this assignment descends to a bijection on the quotients
    \[
    \Typ(\P_S)\longrightarrow\Typ_S(\P),
    \]
    and moreover, the basis $\{\llbracket\alpha\rrbracket\mid \alpha\in\P_S(\tmn)\}$ of the topology on $\Typ(\P_S)$ defined in \cref{d:types} is mapped precisely to the basis $\{\llbracket\alpha\rrbracket\mid \alpha\in\P(S)\}$ defined in \cref{d:types-at-S}, and thus the bijection $\Typ(\P_S)\to\Typ_S(\P)$ is a homeomorphism.
\end{remark}

\begin{theorem}[The space of $S$-types is the Stone dual of the algebra of $S$-formulas]\label{t:S-types-are-ultrafilters}
    Let $\P$ be a first-order Boolean doctrine over a small category $\C$ and let $S\in\C$.
    The space $\Typ_S(\P)$ is the Stone dual of the Boolean algebra $\P(S)$, as witnessed by the homeomorphism
    \begin{align*}
        \Typ_S(\P) & \longrightarrow \Ult(\P(S))\\
        \bigl[(M, \m,s)\bigr] & \longmapsto \bigl\{\alpha \in \P(S) \mid (M, \m,s) \vDash \alpha\bigr\}.
    \end{align*}
\end{theorem}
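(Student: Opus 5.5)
The plan is to reduce to \cref{t:types-are-ultrafilters} applied to the doctrine $\P_S$ obtained by freely adding a constant of type $S$, and then transport along the homeomorphism of \cref{r:bijection}. First note that $\P_S$ is a first-order Boolean doctrine over the Kleisli category $\C_S$. This category is small, since $\C$ is small and $\C_S$ has the same objects as $\C$, with hom-sets $\Hom_{\C_S}(X,Y)=\Hom_\C(S\times X,Y)$. Hence \cref{t:types-are-ultrafilters} applies and yields a homeomorphism $\Typ(\P_S)\to\Ult(\P_S(\tmn))$ sending $[(N,\n)]$ to $\{\alpha\in\P_S(\tmn)\mid (N,\n)\vDash\alpha\}$. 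Moreover $\P_S(\tmn)=\P(S\times\tmn)$, which we identify with $\P(S)$ as the paper does, so $\Ult(\P_S(\tmn))=\Ult(\P(S))$.

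Next, \cref{r:bijection} gives a homeomorphism $\Typ(\P_S)\to\Typ_S(\P)$, sending $[(N,\n)]$ to $[((N,\n)\circ(L_S,\mathfrak{l}_S),\,s)]$, where $s$ is the image of $*$ under $N(\id_S\colon\tmn\rightsquigarrow S)$. Composing its inverse with the homeomorphism of the previous step gives a homeomorphism $\Typ_S(\P)\to\Ult(\P(S))$.

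It remains to check that this composite is the map in the statement. Take $[(M,\m,s)]$ and let $(N,\n)$ be the corresponding model of $\P_S$. Its image is $\{\alpha\in\P(S)\mid (N,\n)\vDash\alpha\}$, and by \cref{l:model-model-at-S} this set equals $\{\alpha\in\P(S)\mid(M,\m,s)\vDash\alpha\}$, as required.

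The only delicate point is the bookkeeping of identifications. We need $\P_S(\tmn)=\P(S)$ to match the identification used in \cref{l:model-model-at-S}, and the smallness of $\C_S$ must be verified before invoking G\"odel's theorem through \cref{t:types-are-ultrafilters}. If one prefers to avoid the remark, one can argue directly instead. Well-definedness and injectivity hold by the definition of $\equiv$. For surjectivity, take an ultrafilter $U$ of $\P(S)$, apply \cref{c:Godel} to $(\P_S)_{\!/U}$, and pull the resulting model back to an $S$-pointed model. Continuity and openness follow because $\llbracket\alpha\rrbracket$ corresponds to $\widehat{\alpha}$.
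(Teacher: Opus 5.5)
Your proposal is correct and matches the paper's proof: you instantiate \cref{t:types-are-ultrafilters} at $\P_S$, transport along the homeomorphism of \cref{r:bijection}, and use \cref{l:model-model-at-S} to identify the composite with the stated map. Your explicit check that $\C_S$ is small, needed to invoke the completeness theorem, is a detail the paper leaves implicit.
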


\begin{proof}
    Modulo the homeomorphism in \cref{r:bijection}, this is the instantiation of \cref{t:types-are-ultrafilters} to $\P_S$.
\end{proof}

\begin{remark}[Clopens of the space of types]\label{r:clopens-of-space-of-types-at-object}
    By \cref{r:clopens-of-space-of-types}, for $\P$ a first-order Boolean doctrine over a small category, the clopens of $\Typ_S(\P)$ are precisely the sets of the form
    \begin{equation*}
        \llbracket \alpha \rrbracket \coloneqq \Bigl\{\bigl[(M, \m,s)\bigr] \mid  (M, \m,s) \vDash \alpha \Bigr\},    \qquad \qquad \alpha \in \P(S).
    \end{equation*}
\end{remark}

\subsection{The type space functor is the Stone dual of the doctrine}\label{ss:types-dual-doctrine}
Let $\P \colon \C\op \to \BA$ be a first-order Boolean doctrine.
For every morphism $f \colon S \to S'$ in $\C$, we have a continuous function
\begin{align*}
    \Typ_f(\P) \colon \Typ_S(\P) & \longrightarrow \Typ_{S'}(\P)\\
    \bigl[(M, \m, s)\bigr] & \longmapsto \Bigl[\bigl(M, \m, M(f)(s)\bigr)\Bigr].
\end{align*}
To prove that this function is indeed well-defined and continuous, we use the following.
\begin{remark}\label{r:typ-f}
    Let $\P$ be a first-order Boolean doctrine over $\C$ and $f\colon S\to S'$ a morphism in $\C$. For every $(M,\m,s)\in\Mod_S(\P)$ and $\beta\in\P(S')$,
    \begin{align*}
        (M,\m,M(f)(s))\vDash \beta &\iff M(f)(s)\in\m_{S'}(\beta)\\
        &\iff s\in M(f)^{-1}[\m_{S'}(\beta)]\\
        &\iff s\in \m_S\bigl(\P(f)(\beta)\bigr) &&\text{(by naturality of $\m$ at $f \colon S \to S'$)}\\
        &\iff (M,\m,s)\vDash \P(f)(\beta).
    \end{align*}
\end{remark}

We show that the function $\Typ_f(\P)$ is well defined.
Let $(M,\m,s), (M',\m',s') \in \Mod_S(\P)$ be such that $(M,\m,s)\equiv(M',\m',s')$; we shall prove $(M,\m,M(f)(s))\equiv(M',\m',M'(f)(s'))$. For every $\beta\in \P(S')$,
\begin{align*}
    \bigl(M,\m,M(f)(s)\bigr)\vDash \beta 
    &\iff (M,\m,s)\vDash \P(f)(\beta) &&\text{(by \cref{r:typ-f})}\\
    &\iff (M',\m',s')\vDash \P(f)(\beta) &&\text{(since $(M,\m,s)\equiv(M',\m',s')$)}\\
    &\iff \bigl(M',\m',M'(f)(s')\bigr)\vDash \beta &&\text{(by \cref{r:typ-f})},
\end{align*}
as desired.

To prove that it is continuous, we observe that the preimage of a basic open is a basic open: indeed, for every $\beta \in \P(S')$, we have
\begin{align*}
    \Typ_f(\P)^{-1}\bigl[\llbracket \beta \rrbracket\bigr]
    & = \Bigl\{\bigl[(M, \m, s)\bigr] \in \Typ_S(\P) \mid \bigl(M, \m, M(f)(s)\bigr) \vDash \beta \Bigr\}\\
    & = \Bigl\{\bigl[(M, \m, s)\bigr] \in \Typ_S(\P) \mid (M, \m, s) \vDash \P(f)(\beta)\Bigr\} && \text{(by \cref{r:typ-f})}\\
    & =\llbracket \P(f)(\beta)\rrbracket.
\end{align*}

\begin{definition}[The type space functor]
    Let $\P \colon \C\op \to \BA$ be a first-order Boolean doctrine, with $\C$ small.
    We let $\Typ_{-}(\P) \colon \C \to \Stone$ denote the functor that:
    \begin{itemize}
        \item (Objects) to each object $S$ assigns the Stone space 
        \[
        \Typ_S(\P) \coloneqq \Mod_S(\P)/{\equiv},
        \]
        defined in \cref{d:types-at-S},
        
        \item (Morphisms) to each morphism $f \colon S \to S'$ assigns the continuous function
        \begin{align*}
            \Typ_f(\P) \colon \Typ_S(\P) & \longrightarrow \Typ_{S'}(\P)\\
        [(M, \m, s)] & \longmapsto [(M, \m, M(f)(s))].
        \end{align*}
    \end{itemize}
\end{definition}

In the following, we observe that ${\Typ_f(\P)}$ is the Stone dual of the Boolean homomorphism $\P(f) \colon \P(S') \to \P(S)$, modulo the homeomorphism in \cref{t:S-types-are-ultrafilters}.

\begin{theorem}[The type space functor is the Stone dual of the doctrine]
    Let $\P \colon \C\op \to \BA$ be a first-order Boolean doctrine, with $\C$ small.
    The type space functor 
    \[
    \Typ_{-}(\P) \colon \C \longrightarrow \Stone
    \]
    is the Stone dual of $\P \colon \C\op \to \BA$, i.e., it is naturally isomorphic to the (opposite functor of the) composite of $\P$ and Stone duality $\BA \cong \Stone\op$.
\end{theorem}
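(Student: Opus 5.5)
The natural isomorphism will consist of the homeomorphisms of \cref{t:S-types-are-ultrafilters}. For each $S\in\C$, write
\[
\eta_S\colon \Typ_S(\P)\longrightarrow \Ult(\P(S)),\qquad \bigl[(M,\m,s)\bigr]\longmapsto \bigl\{\alpha\in\P(S)\mid (M,\m,s)\vDash\alpha\bigr\}.
\]
By that theorem each $\eta_S$ is a homeomorphism; this uses Gödel's completeness theorem through \cref{c:Godel} applied to $\P_S$. On morphisms, the Stone dual of $\P$ sends $f\colon S\to S'$ to $\Ult(\P(f))\colon \Ult(\P(S))\to\Ult(\P(S'))$, $U\mapsto \P(f)^{-1}[U]$. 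So it remains only to check that the family $(\eta_S)_{S\in\C}$ is natural, and that $\Typ_{-}(\P)$ is genuinely a functor.

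The key step is the naturality square. For $f\colon S\to S'$ we must show $\eta_{S'}\circ\Typ_f(\P)=\Ult(\P(f))\circ\eta_S$. Fix $[(M,\m,s)]\in\Typ_S(\P)$ and $\beta\in\P(S')$. By \cref{r:typ-f},
\[
\beta\in\eta_{S'}\bigl[(M,\m,M(f)(s))\bigr]\iff (M,\m,M(f)(s))\vDash\beta\iff (M,\m,s)\vDash\P(f)(\beta)\iff \beta\in \P(f)^{-1}\bigl[\eta_S[(M,\m,s)]\bigr].
\]
So the two ultrafilters coincide and the square commutes.

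Functoriality of $\Typ_{-}(\P)$ also needs a check, since it was stated only as an assignment. It follows from the functoriality of $M$: we have $M(\id_S)(s)=s$ and $M(g\circ f)(s)=M(g)(M(f)(s))$, and these identities pass to equivalence classes. Alternatively, functoriality is forced: $\Typ_{-}(\P)$ is conjugate, via the bijections $\eta_S$, to the functor $\Ult\circ\P\op$, so the naturality computation already determines its action on morphisms. We already know from the discussion preceding the definition that each $\Typ_f(\P)$ is well defined and continuous, and that each $\Typ_S(\P)$ is a Stone space, being homeomorphic to $\Ult(\P(S))$.

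Hence $\eta$ is a natural isomorphism $\Typ_{-}(\P)\cong \Ult\circ\P\op$ in the functor category $[\C,\Stone]$. There is no real obstacle left. The substantive content, namely the surjectivity of each $\eta_S$, which rests on completeness, is already packaged in \cref{t:S-types-are-ultrafilters}. The only point needing care is to identify the Stone dual functor correctly as the preimage map on ultrafilters, so that the variance matches: $\P$ is contravariant and $\Typ_{-}(\P)$ is covariant.
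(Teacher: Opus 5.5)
Your proposal is correct and follows the paper's proof exactly: the components are the homeomorphisms of \cref{t:S-types-are-ultrafilters}, and naturality with respect to $\Ult(\P(f)) = \P(f)^{-1}[-]$ is the one-line consequence of \cref{r:typ-f}. Your extra check that $\Typ_{-}(\P)$ preserves identities and composition, which the paper takes for granted in its definition, is correct and harmless.
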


\begin{proof}
    The natural isomorphism is componentwise defined as the homeomorphism in \cref{t:S-types-are-ultrafilters}:
    \begin{align*}
        \Typ_S(\P) & \longrightarrow \Ult(\P(S))\\
        \bigl[(M, \m,s)\bigr] & \longmapsto \bigl\{\alpha \in \P(S) \mid (M, \m,s) \vDash \alpha\bigr\}.
    \end{align*}
    We are left to check that it is indeed a natural transformation, i.e., that, for every morphism $f \colon S \to S'$ in $\C$, the following diagram commutes.
    \[
    \begin{tikzcd}
    	 {\Typ_S(\P)}&  {\Ult(\P(S))}\\
    	 {\Typ_{S'}(\P)}& {\Ult(\P(S'))}
    	\arrow["\cong", from=1-1, to=1-2]
    	\arrow["{\Typ_f(\P)}"', from=1-1, to=2-1]
    	\arrow["{\Ult(\P(f)) = \P(f)^{-1}[-]}",  from=1-2, to=2-2]
    	\arrow["\cong", from=2-1, to=2-2]
    \end{tikzcd}
    \]
    Let $[(M,\m,s)]\in\Typ_S(\P)$; we shall prove
    \[
    \P(f)^{-1}[\{\alpha\in\P(S)\mid (M,\m,s)\vDash\alpha\}]=\{\beta\in\P(S')\mid (M,\m,M(f)(s))\vDash\beta\},
    \]
    but this trivially follows from \cref{r:typ-f}.
\end{proof}

\subsection{The case with equality}
In this subsection, we show that the results of the previous part of this section remain valid in the setting with equality:
\begin{quote}
    The (appropriate) type space functor is (still) the Stone dual of the doctrine.
\end{quote}
To this end, we adapt the definition of a type of a first-order Boolean doctrine $\P$ to the case in which $\P$ is elementary, and we only consider models of $\P$ as an elementary first-order Boolean doctrine.
The key observation is that this restriction on models does not affect the resulting space of types. Indeed, by \cref{t:quotient}, every model of $\P$ as a (not necessarily elementary) first-order Boolean doctrine is elementarily equivalent to a model of $\P$ as an \emph{elementary} first-order Boolean doctrine.

Recall that a \emph{model} of an elementary first-order Boolean doctrine $\P$ is an elementary first-order Boolean doctrine morphism $(M,\m)$ from $\P$ to the subset doctrine $\pws$.
Moreover, recall that, given a model $(M, \m)$ of an elementary first-order Boolean doctrine $\P$ and $\alpha \in \P(\tmn)$, we write $(M, \m) \vDash \alpha$ if $\m_\tmn(\alpha)$ is the whole singleton $M(\tmn)$.

\begin{definition}
    Let $\P$ be an elementary first-order Boolean doctrine. We define the class
    \[
    \Mod_=(\P)\coloneqq\{(M,\m)\mid (M,\m)\text{ is a model of $\P$}\}.
    \]
\end{definition}

We use the symbol $=$ in $\Mod_=(\P)$ to distinguish it from $\Mod(\P)$, by which we still mean the class of (not necessarily elementary) models of $\P$ as in \cref{d:mod-p}.

The relation of elementary equivalence on $\Mod(\P)$
\[
(M,\m) \equiv (M',\m') \iff \Big(\text{for all }\alpha\in\P(\tmn),\, (M,\m) \vDash \alpha\Leftrightarrow (M',\m') \vDash \alpha\Big)
\]
restricts to an equivalence relation on $\Mod_=(\P)$.

\begin{definition}[Space of types]\label{d:types-aeq}
    Let $\P$ be an elementary first-order Boolean doctrine. 
    We define the set of \emph{types of $\P$} as the quotient
    \[
    \Typ_=(\P)\coloneqq  {\Mod_=(\P)}/{\equiv}.
    \]
    We equip $\Typ_=(\P)$ with the topology generated by the sets of the form
    \[
        \llbracket \alpha \rrbracket \coloneqq \Bigl\{\bigl[(M, \m)\bigr] \mid  (M, \m) \vDash \alpha \Bigr\},    \qquad \qquad \alpha \in \P(\tmn).
    \]
\end{definition}

\begin{remark} \label{r:descends-to-homeo}
    Let $\P$ be an elementary first-order Boolean doctrine. By \cref{t:quotient}, every model of $\P$ as a (not necessarily elementary) first-order Boolean doctrine is elementarily equivalent to a model of $\P$ as an \emph{elementary} first-order Boolean doctrine.
    Therefore, the inclusion 
    \[
    \Mod_=(\P) \longhookrightarrow \Mod(\P)
    \]
    descends to a \emph{homeomorphism}(!) on the quotient
    \[
    \Typ_=(\P) \longrightarrow \Typ(\P).
    \]
\end{remark}

An analogous fact is true for pointed models, by \cref{t:quotient-elementary}.

Therefore, all results in \cref{ss:s-typ,ss:types-dual-doctrine} remain valid if $\P$ is an \emph{elementary} first-order Boolean doctrine and one restricts to models of $\P$ as an \emph{elementary} first-order Boolean doctrine.

\makeatletter
\begingroup
\let\addcontentsline\@gobblethree
\section*{Acknowledgments}

Marco Abbadini thanks Sean Moss for a discussion about G\"odel's completeness theorem for first-order Boolean doctrines, which gave some ideas to get a simpler proof.

\subsection*{Funding}
Marco Abbadini was funded by UK Research and Innovation (UKRI) under the UK government’s Horizon Europe funding guarantee (grant number EP/Y015029/1, Project ``DCPOS'') during his affiliation at the University of Birmingham, and by an FSR Incoming Postdoctoral Fellowship during his affiliation at the Université catholique de Louvain.

Francesca Guffanti was funded by the SHINE program of the French National Research Agency (ANR) under the project ``GULI'' (Grandeurs et Unités pour les Langages Informatiques), grant number ANR-22-EXES-0017.

\endgroup
\makeatother

\bibliographystyle{apalike}
\bibliography{Biblio}

\end{document}